\renewcommand{\section}{%
	\@startsection{section}{1}{\z@}%
	{3.5ex \@plus 1ex \@minus .2ex}%
	{2.3ex \@plus.2ex}%
	{\normalfont\large\bfseries\centering}}
\renewcommand{\subsection}{%
	\@startsection{subsection}{2}{\z@}%
	{3.25ex\@plus 1ex \@minus .2ex}%
	{.8ex \@plus .2ex}%
	{\normalfont\bfseries}}
\renewcommand{\@seccntformat}[1]{\S\csname the#1\endcsname.\ \,}
\renewenvironment{proof}[1][\proofname]{\par
	\pushQED{\qed}%
	\normalfont \topsep6\p@\@plus6\p@\relax
	\trivlist
	\item[%
	\scshape
	\hspace*{\parindent}
	#1\@addpunct{.}]\ignorespaces
}{%
	\popQED\endtrivlist\@endpefalse
}
\def\@endtheorem{\endtrivlist}
\newtheoremstyle{mythm}{}{}{\itshape}{\parindent}{\bfseries}{.}{5.555pt plus 1.666pt minus 1.666pt}{}
\newtheoremstyle{myrem}{}{}{\normalfont}{\parindent}{\scshape}{.}{5.555pt plus 1.666pt minus 1.666pt}{}
\renewenvironment{abstract}%
{%
	\small
	\list{\scshape\abstractname.}%
	{%
		\labelsep 7pt
		\itemindent \labelwidth
		\advance \itemindent by \labelsep
		\listparindent 0pt
		\leftmargin 0.1\paperwidth
		\rightmargin\leftmargin
	\parsep 0pt}%
\item\relax}%
{\endlist}
\renewcommand{\abstractname}{Abstract}
\let\@oldenumerate=\enumerate
\def\enumerate{\@oldenumerate\def\makelabel##1{\hss ##1}}
\let\@olditemize=\itemize
\def\itemize{\@olditemize\def\makelabel##1{\hss ##1}}
\def\@address{}
\def\@email{}
\def\@msc{}
\def\@mscyear{}
\def\@headtitle{}
\def\@headauthor{}
\renewcommand{\title}[2][]{%
	\gdef\@title{#2}%
	\gdef\@headtitle{#1}}
\renewcommand{\author}[2][]{%
	\gdef\@author{#2}%
	\gdef\@headauthor{#1}}
\newcommand*{\address}[1]{\gdef\@address{#1}}
\newcommand*{\email}[1]{\gdef\@email{#1}}
\newcommand*{\subjclass}[2][2000]{\gdef\@mscyear{#1}\gdef\@msc{#2}}
\def\@org{%
	\footnotetext{%
		\hspace*{-1em}\hspace*{-\footnotesep}%
		\ifx\@address\@empty\else\hspace*{\footnotesep}\textsc{\@address}\\\fi
		\ifx\@email\@empty\else\hspace*{\footnotesep}\textit{E-mail address}: \texttt{\@email}\\\fi
\ifx\@msc\@empty\else\hspace*{\footnotesep}{\@mscyear} \textit{Mathematics Subject Classification}. \@msc.\fi}}
\let\@old@@maketitle=\@maketitle
\def\@maketitle{%
	\@old@@maketitle
	\ifx\@address\@empty
	\ifx\@email\@empty
	\ifx\@msc\@empty
	\else\@org\fi
	\else\@org\fi
	\else\@org\fi
}
\let\@old@maketitle=\maketitle
\def\maketitle{%
	\ifx\@headtitle\@empty\let\@headtitle=\@title\fi
	\ifx\@headauthor\@empty\let\@headauthor=\@author\fi
	\@old@maketitle
\gdef\@address{}\gdef\@email{}\gdef\@mscyear{}\gdef\@msc{}}
\def\ps@mystyle{%
	\def\@oddhead{\def\\{\relax}\reset@font\small\hfil\ifodd\value{page}\@headtitle\else\@headauthor\fi\hfil}%
\def\@oddfoot{\reset@font\hfil\thepage\hfil}}
\renewcommand{\labelenumi}{(\theenumi)}
\theoremstyle{mythm}
\newtheorem{thm}{Theorem}[section]
\newtheorem{lem}[thm]{Lemma}
\theoremstyle{myrem}
\DeclareMathOperator{\Hom}{Hom}
\DeclareMathOperator{\Ext}{Ext}
\DeclareMathOperator{\Ker}{Ker}
\DeclareMathOperator{\id}{id}
\newcommand*{\Z}{\mathbb{Z}}
\newcommand*{\C}{\mathbb{C}}
\newcommand*{\R}{\mathbb{R}}
\title[Homomorphisms between principal series]{On the existence of homomorphisms\\ between principal series of complex semisimple Lie groups}
\author{Noriyuki Abe}
\address{Graduate School of Mathematical Sciences, the University of Tokyo, 3--8--1 Komaba, Meguro-ku, Tokyo 153--8914, Japan.}
\email{abenori@ms.u-tokyo.ac.jp}
\subjclass{Primary 22E47, Secondary 17B10}
\date{}
\begin{document}
\maketitle

\begin{abstract}
We determine when there exists a nonzero homomorphism between principal series representations of a complex semisimple Lie group.
We also determine the condition for the existence of nonzero homomorphisms between twisted Verma modules.
\end{abstract}

\section{Introduction}\label{sec:Introduction}
Let $G$ be a complex semisimple Lie group.
Then the principal series representations of $G$ are defined and play an important role in the representation theory of $G$.
One of a fundamental problem about principal series is a description of the space of homomorphisms between such representations (cf.~\cite[p.~720, II]{MR0435300}).
In this paper, we determine when there exists a nonzero homomorphism between principal series representations of a complex semisimple Lie group.
We also determines the existence of homomorphisms between twisted Verma modules.
This gives a generalization of results of Verma~\cite{MR0218417} and Bernstein-Gelfand-Gelfand~\cite{MR0291204}.

We state our main results.
Let $\mathfrak{g}$ be the Lie algebra of $G$, $\mathfrak{h}$ its Cartan subalgebra, $\Delta$ the root system for $(\mathfrak{g},\mathfrak{h})$ and $W$ the Weyl group of $\Delta$.
By the Killing form we identify $\mathfrak{g}$ with $\mathfrak{g}^* = \Hom_\C(\mathfrak{g},\C)$.
Then the Killing form also defines a non-degenerate bilinear form on $\mathfrak{g}^*$.
We denote this form by $\langle\cdot,\cdot\rangle$.
For $\alpha\in \mathfrak{h}^*$, put $\check{\alpha} = 2\alpha/\langle\alpha,\alpha\rangle$ and $s_\alpha(\lambda) = \lambda - \langle\check{\alpha},\lambda\rangle\alpha$.
Take a positive system $\Delta^+\subset \Delta$.
Then $\Delta^+$ determines a Borel subalgebra $\mathfrak{b}$.
Put $\mathfrak{n} = [\mathfrak{b},\mathfrak{b}]$.
Let $\mathcal{O}$ be the Bernstein-Gelfand-Gelfand category~\cite[Definition~1]{MR0407097} for $(\mathfrak{g},\mathfrak{b})$ and $M(\lambda)$ the Verma module with highest weight $\lambda - \rho$ for $\lambda\in\mathfrak{h}^*$ where $\rho$ is the half sum of positive roots.
Fix an involution $\sigma$ of $\mathfrak{g}$ such that $\sigma|_\mathfrak{h} = -\id_\mathfrak{h}$.
The category $\mathcal{O}$ has a dualizing functor $\delta$ defined by $\delta M = \Hom_\C(M,\C)_{\text{$\mathfrak{h}$-finite}}$ where the action is given by $(Xf)(m) = f(-\sigma(X)m)$.
Put $\mathfrak{k} = \{(X,\sigma(X))\mid X\in \mathfrak{g}\}\subset \mathfrak{g}\oplus\mathfrak{g}$.
For $M,N\in\mathcal{O}$, we define the $\mathfrak{g}\oplus\mathfrak{g}$-module $L(M,N) = \Hom_\C(M,N)_{\text{$\mathfrak{k}$-finite}}$ where the action is given by $((X,Y)f)(m) = \sigma(X)f(-Ym)$.
Then under some identification $\mathfrak{g}\otimes_\R\C\simeq \mathfrak{g}\oplus\mathfrak{g}$, the principal representations of $G$ are $L(\lambda,\mu) = L(M(-\mu),\delta M(-\lambda))$.
This is an object of $\mathcal{H}$ where $\mathcal{H}$ is a category of Harish-Chandra modules.

For $\lambda\in\mathfrak{h}^*$, let $\Delta_\lambda$ be the integral root system of $\lambda$, $W_\lambda$ the Weyl group of $\Delta_\lambda$.
Let $\mathcal{P}$ be the integral weight lattice of $\Delta$.
Then it is well-known that $W_\lambda = \{w\in W\mid w\lambda - \lambda\in\mathcal{P}\}$.
Let $w_\lambda$ be the longest element of $W_\lambda$.
Put $\Delta_\lambda^+ = \Delta^+\cap \Delta_\lambda$.
Then $\Delta_\lambda^+$ determines the set of simple roots $\Pi_\lambda$.
Put $S_\lambda = \{s_\alpha\mid\alpha\in\Pi_\lambda\}$ and $W_\lambda^0 = \{w\in W_\lambda\mid w\lambda = \lambda\}$.
For $w\in W_\lambda$, let $\ell_\lambda(w)$ be a length of $w$ as an element of $W_\lambda$.

For a sequence of simple roots $\alpha_1,\dots,\alpha_l\in \Pi_\lambda$ and $\mu\in\mathfrak{h}^*$, we define a subset $A_{(s_{\alpha_1},\dots,s_{\alpha_l})}(\mu)$ of $\mathfrak{h}^*$ as follows.
Put $\beta_i = s_{\alpha_1}\dotsm s_{\alpha_{i - 1}}(\alpha_i)$ for $i = 1,\dots,l$.
For $\mu\in \mathfrak{h}^*$, put
\[
	A_{(s_{\alpha_1},\dots,s_{\alpha_l})}(\mu) =
	\left\{
	\mu'\in\mathfrak{h}^*\Biggm|
	\begin{array}{lll}
	\text{for some $1\le i_1 < \dots < i_r\le l$, $\mu' = s_{\beta_{i_r}}\dotsm s_{\beta_{i_1}}\mu$ and}\\
	\text{$\langle \check{\beta_{i_k}},s_{\beta_{i_{k - 1}}}\dotsm s_{\beta_{i_1}}\mu\rangle\in\Z_{<0}$ for all $k = 1,\dots,r$}
	\end{array}
	\right\}
\]
For a reduced expression $w = s_1\dotsm s_l\in W$, it will be proved that the set $A_{(s_1,\dots,s_l)}(\mu)$ is independent of the choice of a reduced expression (Lemma~\ref{lem:from verma mdoule}).
We write $A_w(\mu)$ instead of $A_{(s_1,\dots,s_l)}(\mu)$.

Now we state the main theorems of this paper.

\begin{thm}\label{thm:main theorem, principal series}
Let $\lambda\in\mathfrak{h}^*$, $\mu_1,\mu_2\in\lambda + \mathcal{P}$ and $w,w'\in W_\lambda$.
Assume that $\lambda$ is dominant, i.e., $\langle\check{\alpha},\lambda\rangle\not\in\Z_{<0}$ for all $\alpha\in\Delta^+$.
Then $\Hom_\mathcal{H}(L(M(w_1\lambda),\delta M(\mu_1)),L(M(w_2\lambda),\delta M(\mu_2))) \ne 0$ if and only if $w_1^{-1}w_\lambda A_{w_\lambda w_1}(w_\lambda\mu_1)\cap W_\lambda^0w_2^{-1}A_{w_2}(\mu_2)\ne \emptyset$.

Moreover, if $\Hom_\mathcal{H}(L(M(w_1\lambda),\delta M(\mu_1)),L(M(w_2\lambda),\delta M(\mu_2))) = 0$, then for all $k\in\Z_{\ge 0}$, we have $\Ext^k_\mathcal{H}(L(M(w_1\lambda),\delta M(\mu_1)),L(M(w_2\lambda),\delta M(\mu_2))) = 0$.
\end{thm}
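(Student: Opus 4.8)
The plan is to translate the question about Harish-Chandra modules into a question about (twisted) Verma modules in category $\mathcal{O}$, using the description $L(\lambda,\mu)=L(M(-\mu),\delta M(-\lambda))$ together with the functor $L(\cdot,\cdot)$. The key structural fact I would rely on is that $L(\cdot,\cdot)$ sends a tensor identity into an equivalence: concretely, $\Hom_\mathcal{H}(L(M,N),L(M',N'))$ and its higher $\Ext$ groups should be computable in terms of $\Ext^\bullet_\mathcal{O}$ between the corresponding modules (a Künneth-type statement, since $\mathfrak{k}$-finite vectors in $\Hom_\C(M,N)$ encode the $\mathfrak{g}\oplus\mathfrak{g}$-action and $\delta$ is a duality on $\mathcal{O}$). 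Thus the first step is to establish a natural isomorphism
\[
	\Ext^k_\mathcal{H}\bigl(L(M(w_1\lambda),\delta M(\mu_1)),L(M(w_2\lambda),\delta M(\mu_2))\bigr)\;\cong\;\bigoplus_{i+j=k}\Ext^i_\mathcal{O}\bigl(M(\mu_2),X\bigr)\otimes\Ext^j_\mathcal{O}\bigl(Y,M(w_1\lambda)\bigr)
\]
for appropriate twisted Verma modules $X,Y$ built from $M(w_2\lambda)$, $M(\mu_1)$ and the duality $\delta$; here the point is that $\delta$ is exact and contravariant, so a $\delta$ on one side turns $\Ext$-vanishing into $\Ext$-vanishing on the other.

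Granting that reduction, the heart of the argument is the corresponding statement for twisted Verma modules, which is the other main theorem of the paper. The strategy there is: (i) twisted Verma modules $M_w(\lambda)$ have the same character as $M(\lambda)$ and are rigid objects whose $\Ext$-quiver structure is governed by the combinatorics of $A_w(\mu)$ via Lemma~\ref{lem:from verma mdoule}; (ii) a nonzero homomorphism $M_{w}(\mu)\to M_{w'}(\lambda)$ exists iff the two modules lie in the same block and a certain ``path'' condition on roots holds, which is exactly what the set-intersection $w_1^{-1}w_\lambda A_{w_\lambda w_1}(w_\lambda\mu_1)\cap W_\lambda^0 w_2^{-1}A_{w_2}(\mu_2)\ne\emptyset$ encodes after unwinding the definitions of $A_w$; (iii) once $\Hom=0$ one upgrades to $\Ext^k=0$ for all $k$ by an argument on the length filtration — if the relevant weights are not comparable in the partial order coming from the $\beta_i$'s, there is no subquotient in common, hence no extensions of any degree. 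I would phrase (iii) using the fact that $\Ext^k_\mathcal{O}(M(\mu),\delta M(\nu))\ne 0$ forces $\mu$ and $\nu$ to be linked in a way detected at the level of $k=1$, i.e. an inductive dévissage along composition series reduces higher $\Ext$-nonvanishing to $\Hom$-nonvanishing between shifted Verma modules.

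The main obstacle I anticipate is step (i)–(ii) of the combinatorial reduction: the sets $A_w(\mu)$ are defined by an intricate condition involving the inequalities $\langle\check\beta_{i_k},s_{\beta_{i_{k-1}}}\dotsm s_{\beta_{i_1}}\mu\rangle\in\Z_{<0}$, and one must show that the naive ``there is a chain of Verma embeddings'' condition is \emph{equivalent} to membership in $A_w(\mu)$, not merely implied by it. The classical Verma/BGG criterion gives the existence of a single embedding $M(s_\beta\mu)\hookrightarrow M(\mu)$ precisely when $\langle\check\beta,\mu\rangle\in\Z_{>0}$; iterating this along a reduced word and tracking which reflections actually move the weight is what produces the $A_w$ condition, but the subtlety is that in the twisted setting some of these maps go the ``other way,'' so the dominance hypothesis on $\lambda$ is essential to control the direction. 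I expect the cleanest route is to first prove everything for $w=e$ (ordinary Verma modules, recovering BGG), then use the intertwining functors / twisting functors $T_s$ to move $w$ one simple reflection at a time, checking compatibility with the recursive structure of $A_{(s_1,\dots,s_l)}(\mu)$ at each step; the $\Ext$-vanishing addendum then comes essentially for free because the twisting functors are (derived) equivalences and hence preserve $\Ext$-vanishing.
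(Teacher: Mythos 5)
Your proposal misidentifies both the structure of the reduction and the key homological input.

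First, the Künneth-type isomorphism you propose, expressing $\Ext^k_\mathcal{H}(L(M,N),L(M',N'))$ as $\bigoplus_{i+j=k}\Ext^i_\mathcal{O}\otimes\Ext^j_\mathcal{O}$, is not a theorem and is false in general: $\mathcal{H}$ is not an external product of two copies of $\mathcal{O}$, and $L(\cdot,\cdot)$ is not a tensor functor in the relevant sense. What is actually available, and what the paper uses, is the Bernstein--Gelfand--Joseph--Enright equivalence (\cite[5.9]{MR581584}): after normalizing so that the \emph{left} slot is a fixed dominant Verma module $M(\mu_1)$, one has a \emph{direct} identification $\Ext^k_\mathcal{H}(L(M(\mu_1),X),L(M(\mu_1),Y))\simeq\Ext^k_\mathcal{O}(X,Y)$ — not a Künneth decomposition. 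To get into that situation one first has to move the $w$-twist across using Joseph's Enright completion functor $C_\alpha$ and the swap involution $\eta$ (Lemma~\ref{lem:conditions for principal series}(1)--(4)), and the non-regular case requires a further argument with translation functors and Jantzen's filtration~\cite[2.3]{MR552943}. None of this is a formal tensor-product exercise.

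Second, the paper does \emph{not} deduce Theorem~\ref{thm:main theorem, principal series} from Theorem~\ref{thm:main theorem, twisted Verma modules}. Both are obtained from the same abstract combinatorial theorem (Theorem~\ref{thm:main thm, general theory}) by verifying the axioms (A1)--(A5) separately: for twisted Verma modules the verification uses the Andersen--Lauritzen exact sequences~\cite[Prop.~6.3]{MR1985191}, while for principal series it uses the Enright functor. So the reduction you plan (principal series $\Rightarrow$ twisted Verma) is not how the argument goes and would require the Künneth step, which is unavailable.

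Third, your mechanism for upgrading $\Hom=0$ to $\Ext^k=0$ for all $k$ is inadequate. A ``dévissage along composition series'' argument does not work here: for two objects of $\mathcal{O}$ with $\Hom=0$, higher $\Ext$ groups are routinely nonzero. Likewise, ``twisting functors are derived equivalences'' only identifies $\Ext^k(T_wX,T_wY)$ with $\Ext^k(X,Y)$ for the \emph{same} $w$; it says nothing directly about $\Ext^k(T_{w_1}M(\mu_1),T_{w_2}M(\mu_2))$ for $w_1\neq w_2$, because $T_{w_1}^{-1}T_{w_2}M(\mu_2)$ is in general a complex, not a twisted Verma module. The actual $\Ext$-vanishing in the paper is a delicate double induction on $\ell_\lambda(w_1)$ and $\ell_\lambda(w_2)$ that exploits the four-term exact sequence (A\ref{enum:exact sequence condition}), the $\alpha$-shift isomorphisms (A\ref{enum:isom condition}), and the base-case vanishing (A\ref{enum:basis step conditoin - Ext}) via Lie algebra cohomology $H_k(\overline{\mathfrak n},M(\mu))=0$; this is where the real work lives, and your sketch does not supply a substitute.
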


We can determine when there exists a nonzero homomorphisms between principal series representations of $G$ from Theorem~\ref{thm:main theorem, principal series} (see Lemma~\ref{lem:isom between principal series}).

Let $T_w$ be the twisting functor for $w\in W$~\cite[6.2]{MR1985191} and $w_0$ the longest element of $W$ (see also Arkhipov~\cite[Definition~2.3.4]{MR2074588}).
\begin{thm}\label{thm:main theorem, twisted Verma modules}
We have $\Hom_\mathcal{O}(T_{w_1}M(\mu_1),T_{w_2}M(\mu_2))\ne 0$ if and only if $w_1 A_{w_1^{-1}}(\mu_1)\cap w_2w_0 A_{w_0w_2^{-1}}(w_0\mu_2)\ne \emptyset$.

Moreover, if $\Hom_\mathcal{O}(T_{w_1}M(\mu_1),T_{w_2}M(\mu_2)) = 0$, then $\Ext^k_\mathcal{O}(T_{w_1}M(\mu_1),T_{w_2}M(\mu_2)) = 0$ for all $k\in\Z_{\ge 0}$.
\end{thm}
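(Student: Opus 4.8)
The plan is to reduce Theorem~\ref{thm:main theorem, twisted Verma modules} to a statement about Verma modules themselves, using the structural properties of the twisting functors $T_w$. First I would recall that $T_w$ is right exact and that its left derived functors $\mathcal{L}_i T_w$ control the homological behaviour; in particular $T_w$ sends Verma modules to twisted Verma modules without higher derived terms, so $T_w M(\mu)$ is acyclic in a suitable sense. The key adjunction is that $T_w$ has a right adjoint $G_w$ (the ``completion'' functor, or equivalently $\delta T_w \delta$ up to a twist), and there is a well-understood formula $G_w = \delta T_{w} \delta$ or $G_w\simeq T_{w^{-1}}^{\vee}$-type relation. Using this, $\Hom_\mathcal{O}(T_{w_1}M(\mu_1),T_{w_2}M(\mu_2))$ can be rewritten, after applying the duality $\delta$ on one side and moving the functor across the $\Hom$, as a space of homomorphisms between an ordinary Verma module and a twisted Verma module, and then — applying the same trick once more — between two objects each of which is obtained from a genuine Verma module by a twist that can be absorbed. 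The combinatorial shadow of this process is precisely the appearance of $w_0$ and the inversions $w_1^{-1}$, $w_0 w_2^{-1}$ in the statement.

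The second, and technically heavier, ingredient is a criterion for when $\Hom_\mathcal{O}$ between a Verma module and a twisted Verma module is nonzero. Here I would invoke the description of $T_w M(\mu)$ in terms of its ``shuffled'' character together with the fact, to be established from Lemma~\ref{lem:from verma mdoule}, that the set $A_w(\mu)$ records exactly which weights $\mu'$ arise as $M(\mu')\hookrightarrow$ (or surjects onto) the relevant subquotients after applying a sequence of $T_{s_\alpha}$'s. Concretely, $T_{s_\alpha}$ either fixes a Verma module up to a nonsplit extension or replaces $M(\mu)$ by $M(s_\alpha\mu)$, according to the sign of $\langle\check\alpha,\mu\rangle$, and iterating a reduced expression of $w$ produces exactly the weight set $A_{w^{-1}}(\mu)$ (after the appropriate bookkeeping with $\beta_i = s_{\alpha_1}\dotsm s_{\alpha_{i-1}}(\alpha_i)$). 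Thus a nonzero map $T_{w_1}M(\mu_1)\to T_{w_2}M(\mu_2)$ exists iff the two families of ``achievable'' Verma highest weights overlap, which is the asserted condition $w_1 A_{w_1^{-1}}(\mu_1)\cap w_2 w_0 A_{w_0 w_2^{-1}}(w_0\mu_2)\ne\emptyset$. The matching of the two sides requires carefully tracking how $\delta$ interacts with $A_w$: $\delta M(\mu)\cong M(\mu)$'s contragredient has ``lowest weight'' data governed by $w_0$, which is the source of the $w_0\mu_2$ and $w_0 w_2^{-1}$ in the second factor.

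For the $\Ext$-vanishing addendum, the natural strategy is to observe that both $T_{w_i}M(\mu_i)$ lie in a block of $\mathcal{O}$, and that the twisting functors are derived equivalences (self-equivalences of $D^b(\mathcal{O})$ up to a shift), so $\Ext^k_\mathcal{O}(T_{w_1}M(\mu_1),T_{w_2}M(\mu_2))$ can be transported to $\Ext$-groups between Verma modules and twisted Verma modules within a single block. One then uses the fact that in a regular (or, via translation, singular) block, two objects with non-overlapping ``linkage support'' in the sense captured by the $A_w$-sets cannot be linked at all: if $\Hom$ vanishes then the objects live in orthogonal sub-blocks, whence all $\Ext^k$ vanish. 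The cleanest way to formalise this is to note that $T_w$ preserves the block decomposition and that the condition in the theorem is symmetric enough to force the two sides into distinct blocks precisely when $\Hom=0$; the $\Ext$ statement is then immediate. I expect the main obstacle to be the first step — pinning down the exact compatibility between the functorial manipulation ($T_w$, $\delta$, adjunctions) and the purely combinatorial set $A_w(\mu)$, i.e.\ proving that the ``achievable weight set'' for $T_{w^{-1}}$ applied to $M(\mu)$ is literally $A_w(\mu)$ up to the stated $W$-translate — since this is where a reduced-expression induction interacts delicately with the non-functorial choices hidden in the definition of $T_{s_\alpha}$ on reducible Verma modules.
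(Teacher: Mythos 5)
Your proposal and the paper diverge both in overall architecture and, more seriously, in the $\Ext$-vanishing step, which as written contains a genuine error.

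On the architecture: the paper does not use the adjunction $T_w \dashv G_w$ or any form of ``move the functor across the $\Hom$'' at all. Instead it sets up an axiomatic framework in Section~2: it fixes a regular dominant $\lambda$, defines $M_\lambda(w,\mu)=T_{w^{-1}w_0}M(w_0\mu)$, and verifies (Lemma~\ref{lem:conditions for twisted Verma module}) that this family satisfies five abstract conditions (A1)--(A5) --- essentially that $T_{s_\alpha}$ either gives an isomorphism $M_\lambda(s_\alpha w,\mu)\simeq M_\lambda(w,s_\alpha\mu)$ when $\langle\check\alpha,\mu\rangle\notin\Z_{<0}$, or fits into the four-term exact sequence
\[
0\to M_\lambda(w,\mu)\to M_\lambda(s_\alpha w,\mu)\to M_\lambda(w,s_\alpha\mu)\to M_\lambda(w,\mu)\to 0
\]
otherwise, plus the base case $\Hom_\mathcal{O}(M(w_0\mu'),\delta M(\mu))\ne0\Leftrightarrow w_0\mu'=\mu$ with all higher $\Ext$'s vanishing. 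The conclusion is then read off from the general Theorem~\ref{thm:main thm, general theory}, whose proof is a single double induction (on $\ell_\lambda(w_1)$ and $\ell_\lambda(w_2)$) that simultaneously establishes the $\Hom$-criterion \emph{and} the $\Ext$-vanishing. Your proposed reduction via $G_w=\delta T_w\delta$ would require you to identify the object $G_{w_1}T_{w_2}M(\mu_2)$ explicitly, which is not appreciably easier than the original problem; you would in effect be redoing the same reduced-expression induction, only with the bookkeeping hidden inside a composition of functors.

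The genuine gap is in your $\Ext$-vanishing argument. You claim that ``if $\Hom=0$ then the objects live in orthogonal sub-blocks, whence all $\Ext^k$ vanish.'' This is false. The two twisted Verma modules $T_{w_1}M(\mu_1)$ and $T_{w_2}M(\mu_2)$ in the interesting cases lie in the \emph{same} block of $\mathcal{O}$ (indeed $\mu_1\in W_{\mu_2}\cdot\mu_2$ is the only nontrivial situation), so vanishing of $\Hom$ does not place them in different blocks, and within a single block $\Hom(A,B)=0$ does not imply $\Ext^k(A,B)=0$ for $k>0$ --- consider two nonisomorphic simples with a nonsplit extension. The $\Ext$-vanishing here is a nontrivial addendum: in the paper's proof it is established by the inductive argument that uses the four-term sequence (A3), the resulting long exact sequences in $\Ext$, and the inductive hypothesis that $\Hom$-vanishing forces $\Ext$-vanishing for shorter words. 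This step cannot be replaced by a soft block-theoretic observation.

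Finally, your statement that ``the condition in the theorem is symmetric enough to force the two sides into distinct blocks precisely when $\Hom=0$'' has no support; the $A_w$-sets are not block invariants but record a much finer combinatorial datum (which iterated twists strictly move the highest weight). Nailing down the induction that proves these sets literally govern the $\Hom$-spaces (the content of Lemma~\ref{lem:from verma mdoule} and Theorem~\ref{thm:main thm, general theory}) is exactly where the real work lies, and your sketch flags this but does not supply it.
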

The proof of this theorem gives a new proof of the famous result of Verma~\cite{MR0218417} and Bernstein-Gelfand-Gelfand~\cite{MR0291204} about homomorphisms between Verma modules.

\subsection*{Acknowledgments}
The author would like to thank his advisor Hisayosi Matumoto for reading a manuscript and giving comments.
He is supported by the Japan Society for the Promotion of Science Research Fellowships for Young Scientists.

\section{General theory}\label{sec:General theory}
We use the notation in Section~\ref{sec:Introduction}.
It is easy to prove the following lemma.
We omit the proof.
\begin{lem}\label{lem:sum of the set A}
Let $s_1,\dots,s_l,s_1',\dots,s_{l'}'\in S_\lambda$ be simple reflections.
Put $w = s_1\dotsm s_l$.
Then we have $A_{(s_1,\dots,s_l,s_1',\dots,s_{l'}')}(\mu) = \bigcup_{\mu'\in A_{(s_1,\dots,s_l)}(\mu)} wA_{(s_1',\dots,s_{l'}')}(w^{-1}\mu')$.
\end{lem}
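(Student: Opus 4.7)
The plan is to unwind the definition of $A_{(s_1,\dots,s_l,s_1',\dots,s_{l'}')}(\mu)$ and match it against the right-hand side by splitting each admissible subsequence into an initial piece in $\{1,\dots,l\}$ and a terminal piece in $\{l+1,\dots,l+l'\}$, then translating the terminal piece via conjugation by $w$.

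First I would set up notation: write $s_i = s_{\alpha_i}$ and $s_j' = s_{\alpha_j'}$, and let $\beta_i = s_1\cdots s_{i-1}(\alpha_i)$ and $\beta_j^{(2)} = s_1'\cdots s_{j-1}'(\alpha_j')$ be the twisted roots for the two subsequences. The key identity is that for the concatenated sequence, the $(l+j)$-th twisted root equals $w\beta_j^{(2)}$; consequently $s_{\beta_{l+j}} = w s_{\beta_j^{(2)}} w^{-1}$.

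Next I would take an element $\mu''\in A_{(s_1,\dots,s_l,s_1',\dots,s_{l'}')}(\mu)$, coming from an increasing subsequence of $\{1,\dots,l+l'\}$. Split it as $i_1<\dots<i_p\le l$ and $l < l+j_1<\dots<l+j_q$ (either part possibly empty). Set $\mu^* = s_{\beta_{i_p}}\cdots s_{\beta_{i_1}}\mu$. The conditions for the indices in the first block are exactly those that put $\mu^*\in A_{(s_1,\dots,s_l)}(\mu)$. For the second block, using the identity above,
\[
\mu'' = s_{\beta_{l+j_q}}\cdots s_{\beta_{l+j_1}}\mu^* = w\bigl(s_{\beta_{j_q}^{(2)}}\cdots s_{\beta_{j_1}^{(2)}}(w^{-1}\mu^*)\bigr),
\]
and by $W$-invariance of $\langle\cdot,\cdot\rangle$, the negativity conditions
\[
\langle\check\beta_{l+j_k},\,s_{\beta_{l+j_{k-1}}}\cdots s_{\beta_{l+j_1}}\mu^*\rangle\in\Z_{<0}
\]
translate, after pulling $w$ out, to the corresponding negativity conditions that witness $w^{-1}\mu''\in A_{(s_1',\dots,s_{l'}')}(w^{-1}\mu^*)$. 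Hence $\mu''\in w A_{(s_1',\dots,s_{l'}')}(w^{-1}\mu^*)$, giving the inclusion $\subset$.

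For the reverse inclusion I would just run the same bookkeeping backwards: given $\mu'\in A_{(s_1,\dots,s_l)}(\mu)$ realized by a subsequence $i_1<\dots<i_p$, and $w^{-1}\mu''\in A_{(s_1',\dots,s_{l'}')}(w^{-1}\mu')$ realized by a subsequence $j_1<\dots<j_q$, the concatenated subsequence $i_1<\dots<i_p<l+j_1<\dots<l+j_q$ together with the same identity $s_{\beta_{l+j}} = w s_{\beta_j^{(2)}}w^{-1}$ and $W$-invariance of $\langle\cdot,\cdot\rangle$ produce an admissible datum in the combined sequence whose associated element is $\mu''$. The only real obstacle is keeping the indices and conjugations straight; all of it boils down to the single identity $s_{w\alpha}=ws_\alpha w^{-1}$ applied to the twisted roots arising from the second half of the sequence.
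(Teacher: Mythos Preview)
Your argument is correct: the identity $\beta_{l+j}=w\beta_j^{(2)}$ (hence $s_{\beta_{l+j}}=ws_{\beta_j^{(2)}}w^{-1}$) together with the $W$-invariance of $\langle\cdot,\cdot\rangle$ is exactly what makes the splitting of an admissible subsequence into its first-block and second-block parts work, and the two inclusions follow as you wrote. The paper itself omits the proof entirely (``It is easy to prove the following lemma.\ We omit the proof.''), so there is nothing to compare against; your direct unwinding of the definition is the intended elementary verification.
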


Fix a dominant $\lambda\in\mathfrak{h}^*$.
Let $\mathcal{C}$ be an abelian category with enough injective objects, $\mathcal{D}\subset \mathfrak{h}^*$ a $W_\lambda$-stable subset.
Let $\{M_\lambda(w,\mu)\mid w\in W,\mu\in\mathcal{D}\}$ be objects of $\mathcal{C}$ such that the following conditions are satisfied:
\begin{enumerate}
\renewcommand*{\labelenumi}{(A\theenumi)}
\item For $w\in W_\lambda$ and $w'\in W_{\lambda}^0$, $M_\lambda(ww',\mu) \simeq M_\lambda(w,\mu)$.\label{enum:w <-> w lambda condition}
\item For $\alpha\in\Pi_\lambda$ such that $s_\alpha w > w$, if $\langle\check{\alpha},\mu\rangle\not\in\Z_{<0}$ then we have $M_\lambda(s_\alpha w,\mu)\simeq M_\lambda(w,s_\alpha\mu)$.\label{enum:isom condition}
\item For $\alpha\in\Pi_\lambda$ such that $s_\alpha w > w$, $\langle\alpha,w\lambda\rangle \ne 0$ and $\langle\check{\alpha},\mu\rangle\in\Z_{<0}$ there exists an exact sequence $0\to M_\lambda(w,\mu)\to M_\lambda(s_\alpha w,\mu)\to M_\lambda(w,s_\alpha\mu)\to M_\lambda(w,\mu)\to 0$.\label{enum:exact sequence condition}
\item We have $\Hom_\mathcal{C}(M_\lambda(w_\lambda,\mu'),M_\lambda(e,\mu))\ne 0$ if and only if $\mu \in W_\lambda^0w_\lambda \mu'$.\label{enum:basis step conditoin - Hom}
\item We have $\Ext^k_\mathcal{C}(M_\lambda(w_\lambda,\mu'),M_\lambda(e,\mu)) = 0$ for $k > 0$.\label{enum:basis step conditoin - Ext}
\end{enumerate}

\begin{lem}\label{lem:<alpha wlambda> = 0}
Let $\alpha\in\Pi_\lambda$, $w\in W_\lambda$, $\mu\in\mathcal{D}$.
Assume that $\langle\alpha,w\lambda\rangle = 0$.
Then we have $M_\lambda(w,\mu)\simeq M_\lambda(w,s_\alpha\mu)\simeq M_\lambda(s_\alpha w,\mu)\simeq M_\lambda(s_\alpha w,s_\alpha\mu)$.
\end{lem}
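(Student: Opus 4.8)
The plan is to reduce the four-fold isomorphism to the isomorphism axioms (A\ref{enum:w <-> w lambda condition}) and (A\ref{enum:isom condition}) only; the exactness and base-case axioms (A\ref{enum:exact sequence condition})--(A\ref{enum:basis step conditoin - Ext}) will play no role. First I would translate the hypothesis: since $\check\alpha = 2\alpha/\langle\alpha,\alpha\rangle$, the condition $\langle\alpha,w\lambda\rangle = 0$ is equivalent to $\langle\check\alpha,w\lambda\rangle = 0$, and hence $s_\alpha(w\lambda) = w\lambda - \langle\check\alpha,w\lambda\rangle\alpha = w\lambda$. Therefore $w^{-1}s_\alpha w\in W_\lambda$ (both $s_\alpha\in S_\lambda$ and $w$ lie in $W_\lambda$) and it fixes $\lambda$, so $w^{-1}s_\alpha w\in W_\lambda^0$. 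Writing $s_\alpha w = w\cdot(w^{-1}s_\alpha w)$ and applying (A\ref{enum:w <-> w lambda condition}) once with $\mu$ and once with $s_\alpha\mu$ (note $s_\alpha\mu\in\mathcal{D}$ since $\mathcal{D}$ is $W_\lambda$-stable) gives $M_\lambda(s_\alpha w,\mu)\simeq M_\lambda(w,\mu)$ and $M_\lambda(s_\alpha w,s_\alpha\mu)\simeq M_\lambda(w,s_\alpha\mu)$. After this, it remains only to prove the single isomorphism $M_\lambda(w,\mu)\simeq M_\lambda(w,s_\alpha\mu)$.

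For that step I would first reduce to the case $s_\alpha w > w$. Indeed, the set of four modules in the statement is permuted among itself under $w\mapsto s_\alpha w$, the hypothesis $\langle\alpha,w\lambda\rangle = 0$ is preserved (because $(s_\alpha w)\lambda = w\lambda$), and $s_\alpha w$ again lies in $W_\lambda$; so if $s_\alpha w < w$ one replaces $w$ by $s_\alpha w$. Assuming now $s_\alpha w > w$, axiom (A\ref{enum:isom condition}) is applicable as soon as $\langle\check\alpha,\mu\rangle\notin\Z_{<0}$, in which case it gives $M_\lambda(s_\alpha w,\mu)\simeq M_\lambda(w,s_\alpha\mu)$, and combining with $M_\lambda(s_\alpha w,\mu)\simeq M_\lambda(w,\mu)$ from the previous paragraph finishes the proof. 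If instead $\langle\check\alpha,\mu\rangle\in\Z_{<0}$, then $\langle\check\alpha,s_\alpha\mu\rangle = -\langle\check\alpha,\mu\rangle\in\Z_{>0}$, so (A\ref{enum:isom condition}) applies with $\mu$ replaced by $s_\alpha\mu$ and yields $M_\lambda(s_\alpha w,s_\alpha\mu)\simeq M_\lambda(w,\mu)$; combining with $M_\lambda(s_\alpha w,s_\alpha\mu)\simeq M_\lambda(w,s_\alpha\mu)$ again gives the claim. Chaining the three isomorphisms produces $M_\lambda(w,\mu)\simeq M_\lambda(w,s_\alpha\mu)\simeq M_\lambda(s_\alpha w,\mu)\simeq M_\lambda(s_\alpha w,s_\alpha\mu)$.

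I do not expect a genuine obstacle here. The only points requiring care are: checking that the "without loss of generality $s_\alpha w > w$" reduction really preserves all the relevant data (membership in $W_\lambda$, the vanishing $\langle\alpha,w\lambda\rangle = 0$ via $(s_\alpha w)\lambda = w\lambda$, and the four-element set of modules), and splitting on the sign of $\langle\check\alpha,\mu\rangle$ so that the hypothesis $\langle\check\alpha,\mu\rangle\notin\Z_{<0}$ needed in (A\ref{enum:isom condition}) is met, possibly after swapping $\mu$ and $s_\alpha\mu$.
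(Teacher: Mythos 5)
Your proposal is correct and follows essentially the same argument as the paper: reduce to a single length case, derive the two isomorphisms $M_\lambda(s_\alpha w,\mu)\simeq M_\lambda(w,\mu)$ and $M_\lambda(s_\alpha w,s_\alpha\mu)\simeq M_\lambda(w,s_\alpha\mu)$ from (A\ref{enum:w <-> w lambda condition}) via $w^{-1}s_\alpha w\in W_\lambda^0$, and close the chain by a sign split on $\langle\check\alpha,\mu\rangle$ using (A\ref{enum:isom condition}). The only cosmetic difference is that you normalize to $s_\alpha w>w$ while the paper normalizes to $s_\alpha w<w$; your case split on $\langle\check\alpha,\mu\rangle\in\Z_{<0}$ versus not is, if anything, slightly cleaner than the paper's.
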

\begin{proof}
If necessary, replacing $w$ by $s_\alpha w$, we may assume that $s_\alpha w < w$.
By applying the condition (A\ref{enum:w <-> w lambda condition}) as $w' = s_{w^{-1}\alpha}$, we get $M_\lambda(s_\alpha w,\mu)\simeq M_\lambda(w,\mu)$ and $M_\lambda(s_\alpha w,s_\alpha \mu)\simeq M_\lambda(w,s_\alpha \mu)$.
If $\langle\alpha,\mu\rangle\ge 0$, then $M_\lambda(w,\mu) \simeq M_\lambda(s_\alpha w,s_\alpha \mu)$ by the condition (A\ref{enum:isom condition}).
If $\langle\alpha,\mu\rangle\le 0$, then $M_\lambda(w,s_\alpha \mu)\simeq M_\lambda(s_\alpha w, \mu)$ by the condition (A\ref{enum:isom condition}).
Hence we have $M_\lambda(w,\mu)\simeq M_\lambda(s_\alpha w,\mu)\simeq M_\lambda(w,s_\alpha \mu)\simeq M_\lambda(s_\alpha w,s_\alpha \mu)$ for all $\mu$.
\end{proof}

\begin{lem}\label{lem:from verma mdoule}
Let $w_2\in W$, $w_2 = s_1\dotsm s_l$ be a reduced expression and $\mu_1,\mu_2\in\mathcal{D}$.
Then the following conditions are equivalent.
\begin{enumerate}
\item $\Hom_\mathcal{C}(M_\lambda(w_\lambda,\mu_1),M_\lambda(w_2,\mu_2))\ne 0$.
\item There exists $k\in\Z_{\ge 0}$ such that $\Ext^k_\mathcal{C}(M_\lambda(w_\lambda,\mu_1),M_\lambda(w_2,\mu_2))\ne 0$.
\item $\mu_1\in w_\lambda  W_{\lambda}^0 w_2^{-1} A_{(s_1,\dots,s_l)}(\mu_2)$.
\end{enumerate}
\end{lem}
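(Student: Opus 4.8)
The plan is to induct on $l = \ell(w_2)$, using the axioms (A1)--(A5) to peel off one simple reflection at a time. The base case $l = 0$ (so $w_2 = e$) is exactly the content of axioms (A\ref{enum:basis step conditoin - Hom}) and (A\ref{enum:basis step conditoin - Ext}): the Hom and Ext statements both reduce to the condition $\mu_1 \in W_\lambda^0 w_\lambda \mu_2$, and since $A_{()}(\mu_2) = \{\mu_2\}$ this is precisely condition (3) (note $w_\lambda W_\lambda^0 = W_\lambda^0 w_\lambda$ because $W_\lambda^0$ is normal in the stabilizer setup, or one argues directly). So all three conditions are equivalent when $l = 0$.

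For the inductive step, write $w_2 = s_\alpha w_2'$ with $\alpha = s_1 \in \Pi_\lambda$, $w_2' = s_2 \cdots s_l$, and $s_\alpha w_2' > w_2'$. I would split into the three cases governed by the axioms. First, if $\langle \alpha, w_2'\lambda\rangle = 0$: then Lemma \ref{lem:<alpha wlambda> = 0} gives $M_\lambda(w_2, \mu_2) \simeq M_\lambda(w_2', \mu_2)$, and one checks on the combinatorial side that $A_{(s_1,\dots,s_l)}(\mu_2) = A_{(s_2,\dots,s_l)}(\mu_2)$ in this degenerate case (the reflection $s_{\beta_1}$ acts trivially on the relevant $\lambda$-data), so we reduce directly to the shorter word. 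Second, if $\langle \alpha, w_2'\lambda\rangle \ne 0$ and $\langle\check\alpha,\mu_2\rangle \notin \Z_{<0}$: axiom (A\ref{enum:isom condition}) gives $M_\lambda(w_2,\mu_2) \simeq M_\lambda(w_2', s_\alpha\mu_2)$, so $\Hom$ and $\Ext$ against $M_\lambda(w_\lambda,\mu_1)$ are computed by the inductive hypothesis applied to $w_2'$ and $s_\alpha\mu_2$; then one verifies $w_\lambda W_\lambda^0 (w_2')^{-1} A_{(s_2,\dots,s_l)}(s_\alpha\mu_2) = w_\lambda W_\lambda^0 w_2^{-1} A_{(s_1,\dots,s_l)}(\mu_2)$, which follows because in this case $\langle\check\beta_1,\mu_2\rangle \notin \Z_{<0}$ forces the index $i_1 = 1$ never to be selected, so $A_{(s_1,\dots,s_l)}(\mu_2) = s_{\beta_1}^{-1}(\text{stuff})$... more cleanly, $s_\alpha \cdot A_{(s_2,\dots,s_l)}(s_\alpha \mu_2)$ matches $A_{(s_1,\dots,s_l)}(\mu_2)$ via Lemma \ref{lem:sum of the set A} applied with the one-term prefix $(s_1)$, noting $A_{(s_1)}(\mu_2) = \{\mu_2\}$ since $\langle\check{\beta_1},\mu_2\rangle = \langle\check\alpha,\mu_2\rangle \notin \Z_{<0}$.

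Third, the crux: $\langle\alpha, w_2'\lambda\rangle \ne 0$ and $\langle\check\alpha,\mu_2\rangle \in \Z_{<0}$. Here axiom (A\ref{enum:exact sequence condition}) supplies the four-term exact sequence $0 \to M_\lambda(w_2',\mu_2) \to M_\lambda(w_2,\mu_2) \to M_\lambda(w_2', s_\alpha\mu_2) \to M_\lambda(w_2',\mu_2) \to 0$. I would break it into two short exact sequences via the image $K$ of the middle map, apply $\Hom_\mathcal{C}(M_\lambda(w_\lambda,\mu_1), -)$, and chase the resulting long exact sequences. The inductive hypothesis controls $\Ext^\bullet$ against both $M_\lambda(w_2',\mu_2)$ and $M_\lambda(w_2', s_\alpha\mu_2)$, so one gets that $\Ext^\bullet(M_\lambda(w_\lambda,\mu_1), M_\lambda(w_2,\mu_2))$ is nonzero in some degree if and only if $\mu_1$ lies in $w_\lambda W_\lambda^0 (w_2')^{-1}\bigl(A_{(s_2,\dots,s_l)}(\mu_2) \cup A_{(s_2,\dots,s_l)}(s_\alpha\mu_2)\bigr)$; and by Lemma \ref{lem:sum of the set A} with prefix $(s_1)$ — where now $A_{(s_1)}(\mu_2) = \{\mu_2, s_\alpha\mu_2\}$ precisely because $\langle\check{\beta_1},\mu_2\rangle = \langle\check\alpha,\mu_2\rangle \in \Z_{<0}$ — this union pulls back to exactly $w_2^{-1} A_{(s_1,\dots,s_l)}(\mu_2)$. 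The main obstacle is the bookkeeping in this case: one must check that the connecting maps in the long exact sequence do not produce cancellation that would make $\Ext$ vanish even when the combinatorial set is hit — i.e., that (1), (2), (3) stay equivalent rather than just (2)$\Rightarrow$(3) and (3)$\Rightarrow$(1). This should follow from the fact that the relevant Hom-space in degree $0$ is detected by a genuine nonzero map (the composite through the exact sequence) together with the vanishing of the higher $\Ext$'s that would otherwise interfere; keeping the $\Ext$-vanishing (A5)-type conclusion threaded through the induction is what makes the three conditions collapse to one. Finally, independence of $A_{(s_1,\dots,s_l)}(\mu_2)$ from the reduced expression (the parenthetical claim in the statement) falls out for free: the left-hand sides (1) and (2) manifestly do not depend on the chosen reduced word, so their common equivalent (3) cannot either.
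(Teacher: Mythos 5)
Your overall strategy coincides with the paper's: induct on $\ell_\lambda(w_2)$, peel off $s_\alpha = s_1$, and split into the three cases dictated by (A1)--(A5). The base case and the second case ($\langle\alpha,w_2\lambda\rangle \ne 0$, $\langle\check\alpha,\mu_2\rangle\notin\Z_{<0}$) are handled exactly as in the paper. But there are two places where you have not actually closed the argument.

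The real gap is in your third case, which is where the theorem lives. You observe, correctly, that after splitting the four-term sequence into two short exact sequences one must worry that ``the connecting maps... produce cancellation,'' and you then say this ``should follow'' from the nonzero map in degree $0$ together with higher-$\Ext$ vanishing. But as written this is not an argument, and the phrase about detecting a ``genuine nonzero map'' points in the wrong direction: the delicate implication is $\neg(1)\Rightarrow\neg(3)$ and $\neg(1)\Rightarrow\neg(2)$, i.e.\ a \emph{vanishing} statement, and it must be extracted precisely. Here is the mechanism, which your sketch omits. Assume $\Hom_\mathcal{C}(M_\lambda(w_\lambda,\mu_1),M_\lambda(w_2,\mu_2))=0$. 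By left-exactness of $\Hom$ applied to the first short exact sequence, $\Hom_\mathcal{C}(M_\lambda(w_\lambda,\mu_1),M_\lambda(s_\alpha w_2,\mu_2))=0$. Now the inductive hypothesis in its full strength --- not just the characterization of when $\Hom\ne 0$, but the equivalence $(1)\Leftrightarrow(2)$ at shorter length --- upgrades this to $\Ext^k_\mathcal{C}(M_\lambda(w_\lambda,\mu_1),M_\lambda(s_\alpha w_2,\mu_2))=0$ for \emph{all} $k\ge 0$. Only with this total vanishing in hand do the two long exact sequences collapse to give $\Ext^k_\mathcal{C}(\,\cdot\,,M_\lambda(w_2,\mu_2))\simeq\Ext^k_\mathcal{C}(\,\cdot\,,L)\simeq\Ext^k_\mathcal{C}(\,\cdot\,,M_\lambda(s_\alpha w_2,s_\alpha\mu_2))$ for all $k$. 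In particular $\Hom_\mathcal{C}(\,\cdot\,,M_\lambda(s_\alpha w_2,s_\alpha\mu_2))=0$, so by the inductive hypothesis again $\mu_1$ misses the set for $s_\alpha\mu_2$ as well; combined with the vanishing for $\mu_2$ and Lemma~\ref{lem:sum of the set A} this gives $\neg(3)$, and the isomorphism of $\Ext^k$ in all degrees gives $\neg(2)$. If you only carry the $\Hom$-statement through the induction, this step breaks: the long exact sequence alone does not let you pass from $\Hom(\,\cdot\,,M_\lambda(w_2,\mu_2))=0$ to $\Hom(\,\cdot\,,M_\lambda(s_\alpha w_2,s_\alpha\mu_2))=0$.

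A secondary issue: in the degenerate case $\langle\alpha,w_2\lambda\rangle=0$, your claim that $A_{(s_1,\dots,s_l)}(\mu_2)=A_{(s_2,\dots,s_l)}(\mu_2)$ as sets is not what is needed and is not in general true; by Lemma~\ref{lem:sum of the set A} the left side is $s_\alpha A_{(s_2,\dots,s_l)}(s_\alpha\mu_2)$ or $s_\alpha A_{(s_2,\dots,s_l)}(s_\alpha\mu_2)\cup s_\alpha A_{(s_2,\dots,s_l)}(\mu_2)$. The correct statement, which the paper establishes, is the equality only after multiplying by $W_\lambda^0 w_2^{-1}$: one first uses Lemma~\ref{lem:<alpha wlambda> = 0} and the inductive hypothesis to see $W_\lambda^0(s_\alpha w_2)^{-1}A_{(s_2,\dots,s_l)}(\mu_2) = W_\lambda^0(s_\alpha w_2)^{-1}A_{(s_2,\dots,s_l)}(s_\alpha\mu_2)$, and only then does the union coming from $A_{(s_\alpha)}(\mu_2)$ collapse. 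Finally, your closing remark that independence of $A_w(\mu)$ from the reduced word ``falls out for free'' is correct and is exactly the paper's observation, provided the category and objects in question exist for some regular $\lambda$; that existence is deferred to Section~\ref{sec:Proof of the main theorems}.
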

\begin{proof}
Obviously, (1) implies (2).
We prove the lemma by induction on $\ell_\lambda(w_2)$.
If $w_2 = e$, then the lemma follows from the conditions (A\ref{enum:basis step conditoin - Hom}) and (A\ref{enum:basis step conditoin - Ext}).

Assume that $\ell_\lambda(w_2) > 0$.
Take $\alpha\in\Pi_\lambda$ such that $s_1 = s_\alpha$.
First assume that $\langle \alpha,w_2\lambda\rangle = 0$.
Then we have $W_\lambda^0(s_\alpha w_2)^{-1}A_{(s_2,\dots,s_l)}(\mu_2) = W_\lambda^0(s_\alpha w_2)^{-1}A_{(s_2,\dots,s_l)}(s_\alpha \mu_2)$ by Lemma~\ref{lem:<alpha wlambda> = 0} and induction hypothesis.
By the definition, we have $A_{(s_\alpha)}(\mu_2) = \{\mu_2\}$ or $A_{(s_\alpha)}(\mu_2) = \{\mu_2,s_\alpha\mu_2\}$.
Therefore $W_\lambda^0w_2^{-1}A_{(s_1,\dots,s_l)}(\mu_2) = W_\lambda^0(s_\alpha w_2)^{-1} A_{(s_2,\dots,s_l)}(\mu_2)$ by Lemma~\ref{lem:sum of the set A}.
This implies the lemma in the case of $\langle \alpha,w_2\lambda\rangle = 0$.

In the rest of this proof, we assume that $\langle \alpha,w_2\lambda\rangle \ne 0$.
Assume that $\langle\check{\alpha},\mu_2\rangle\not\in\Z_{<0}$, then, by the condition (A\ref{enum:isom condition}), $M_\lambda(w_2,\mu_2)\simeq M_\lambda(s_\alpha w_2,s_\alpha\mu_2)$.
Since $A_{(s_\alpha)}(\mu_2) = \{\mu_2\}$, we have $w_2^{-1}A_{(s_1,\dots,s_l)}(\mu_2) = (s_\alpha w_2)^{-1}A_{(s_2,\dots,s_l)}(s_\alpha\mu_2)$ by Lemma~\ref{lem:sum of the set A}.
Hence (1)--(3) are equivalent in this case.

Finally assume that $\langle\check{\alpha},\mu_2\rangle\in\Z_{<0}$.
Then we have $A_{(s_\alpha)}(\mu_2) = \{\mu_2,s_\alpha\mu_2\}$.
This implies that $w_2^{-1}A_{(s_1,\dots,s_l)}(\mu_2) =  (s_\alpha w_2)^{-1}A_{(s_2,\dots,s_l)}(\mu_2)\cup  (s_\alpha w_2)^{-1}A_{(s_2,\dots,s_l)}(s_\alpha\mu_2)$ by Lemma~\ref{lem:sum of the set A}.
By the induction hypothesis, $\mu_1\not\in w_\lambda  W_{\lambda}^0 w_2^{-1} A_{(s_1,\dots,s_l)}(\mu_2)$ if and only if 
\[
	\Hom_\mathcal{C}(M_\lambda(w_\lambda,\mu),M_\lambda(s_\alpha w_2,\mu_2)) = \Hom_\mathcal{C}(M_\lambda(w_\lambda,\mu),M_\lambda(s_\alpha w_2,s_\alpha \mu_2)) = 0.
\]
From the condition (A\ref{enum:exact sequence condition}), we have an exact sequence 
\begin{equation}\label{eq:4-term exact sequence}
0\to M_\lambda(s_\alpha w_2,\mu_2)\to M_\lambda(w_2,\mu_2)\to M_\lambda(s_\alpha w_2,s_\alpha\mu_2)\to M_\lambda(s_\alpha w_2,\mu_2)\to 0.
\end{equation}
Since the functor $\Hom_\mathcal{C}$ is left-exact, we have an exact sequence 
\begin{multline*}
0\longrightarrow \Hom_\mathcal{C}(M_\lambda(w_\lambda,\mu),M_\lambda(s_\alpha w_2,\mu_2))\\\longrightarrow \Hom_\mathcal{C}(M_\lambda(w_\lambda,\mu),M_\lambda(w_2,\mu_2))\longrightarrow \Hom_\mathcal{C}(M_\lambda(w_\lambda,\mu),M_\lambda(s_\alpha w_2,s_\alpha\mu_2)).
\end{multline*}
If (3) does not hold, $\Hom_\mathcal{C}(M_\lambda(w_\lambda,\mu),M_\lambda(s_\alpha w_2,\mu_2)) = \Hom_\mathcal{C}(M_\lambda(w_\lambda,\mu),M_\lambda(s_\alpha w_2,s_\alpha \mu_2)) = 0$.
Hence we have $\Hom_\mathcal{C}(M_\lambda(w_\lambda,\mu),M_\lambda(w_2,\mu_2)) = 0$, i.e., (1) does not hold.
Therefore, (1) implies (3).

Now assume that (1) does not hold, i.e., $\Hom_\mathcal{C}(M_\lambda(w_\lambda,\mu),M_\lambda(w_2,\mu_2)) = 0$.
We prove $\Hom_\mathcal{C}(M_\lambda(w_\lambda,\mu),M_\lambda(s_\alpha w_2,\mu_2)) = \Hom_\mathcal{C}(M_\lambda(w_\lambda,\mu),M_\lambda(s_\alpha w_2,s_\alpha\mu_2)) = 0$ and, for all $k\in\Z_{>0}$, $\Ext_\mathcal{C}^k(M_\lambda(w_\lambda,\mu),M_\lambda(w_2,\mu_2)) = 0$.
These imply the lemma.

By the exact sequence 
\[
	0\to \Hom_\mathcal{C}(M_\lambda(w_\lambda,\mu),M_\lambda(s_\alpha w_2,\mu_2))\to \Hom_\mathcal{C}(M_\lambda(w_\lambda,\mu),M_\lambda(w_2,\mu_2)),
\]
we have $\Hom_\mathcal{C}(M_\lambda(w_\lambda,\mu),M_\lambda(s_\alpha w_2,\mu_2)) = 0$.
Hence we have $\Ext_\mathcal{C}^k(M_\lambda(w_\lambda,\mu),M_\lambda(s_\alpha w_2,\mu_2)) = 0$ for all $k\in\Z_{\ge 0}$ by induction hypothesis.
Put $L = \Ker(M_\lambda(s_\alpha w_2,s_\alpha \mu_2)\to M_\lambda(s_\alpha w_2,\mu_2))$.
From an exact sequence \eqref{eq:4-term exact sequence}, we have exact sequences 
\[
	0\to M_\lambda(s_\alpha w,\mu_2)\to M_\lambda(w,\mu_2)\to L\to 0
\]
and
\[
	0\to L\to M_\lambda(s_\alpha w_2,s_\alpha \mu_2)\to M_\lambda(s_\alpha w_2,\mu_2)\to 0.
\]
Using $\Ext_\mathcal{C}^k(M_\lambda(w_\lambda,\mu),M_\lambda(s_\alpha w_2,\mu_2)) = 0$ and the long exact sequences induced from there sequences, we have
\[
	\Ext_\mathcal{C}^k(M_\lambda(w_\lambda,\mu),M_\lambda(w_2,\mu_2))\simeq \Ext_\mathcal{C}^k(M_\lambda(w_\lambda,\mu),L)\simeq \Ext_\mathcal{C}^k(M_\lambda(w_\lambda,\mu),M_\lambda(s_\alpha w_2,s_\alpha\mu_2)).
\]
In particular, $\Hom_\mathcal{C}(M_\lambda(w_\lambda,\mu),M_\lambda(s_\alpha w_2,s_\alpha\mu_2)) \simeq \Hom_\mathcal{C}(M_\lambda(w_\lambda,\mu),M_\lambda(w_2,\mu_2)) = 0$.
By induction hypothesis, we have $\Ext_\mathcal{C}^k(M_\lambda(w_\lambda,\mu),M_\lambda(s_\alpha w_2,s_\alpha\mu_2)) = 0$ for all $k\in\Z_{>0}$.
Hence we have $\Ext_\mathcal{C}^k(M_\lambda(w_\lambda,\mu),M_\lambda(w_2,\mu_2)) = 0$ for all $k\in\Z_{>0}$.
\end{proof}
If for some abelian category $\mathcal{C}$ and some regular $\lambda$ there exist objects which satisfy the conditions (A1--5), then the set $A_{(s_1,\dots,s_l)}(\mu)$ is independent of the choice of a reduced expression by Lemma~\ref{lem:from verma mdoule}.
In the rest of this section, we assume it (It will be proved in Section~\ref{sec:Proof of the main theorems}).
Put $A_{w_2}(\mu) = A_{(s_1,\dots,s_l)}(\mu)$.

\begin{thm}\label{thm:main thm, general theory}
Let $w_1,w_2\in W_\lambda$ and $\mu_1,\mu_2\in\mathcal{D}$.
The following conditions are equivalent.
\begin{enumerate}
\item $\Hom_\mathcal{C}(M_\lambda(w_1,\mu_1),M_\lambda(w_2,\mu_2))\ne 0$.
\item There exists $k\in\Z_{\ge 0}$ such that $\Ext^k_\mathcal{C}(M_\lambda(w_1,\mu_1),M_\lambda(w_2,\mu_2))\ne 0$.
\item $w_1^{-1}w_\lambda A_{w_\lambda w_1}(w_\lambda\mu_1)\cap W_\lambda^0w_2^{-1}A_{w_2}(\mu_2)\ne \emptyset$.
\end{enumerate}
\end{thm}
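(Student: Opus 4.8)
The idea is to reduce Theorem~\ref{thm:main thm, general theory} to Lemma~\ref{lem:from verma mdoule}, which already handles the special case $w_1 = w_\lambda$. The asymmetry between the two slots of $\Hom_\mathcal{C}$ must be dealt with by a separate argument for the first variable. Concretely, I expect to prove by descending induction on $\ell_\lambda(w_1)$ (starting from $w_1 = w_\lambda$, which is Lemma~\ref{lem:from verma mdoule}) a statement of the form: for all $w_1, w_2 \in W_\lambda$ and $\mu_1, \mu_2 \in \mathcal{D}$, the three conditions (1)--(3) are equivalent. The base case $\ell_\lambda(w_1) = \ell_\lambda(w_\lambda)$ is exactly Lemma~\ref{lem:from verma mdoule} once one checks that $w_1^{-1}w_\lambda A_{w_\lambda w_1}(w_\lambda\mu_1) = \{\mu_1\}$ when $w_1 = w_\lambda$ — indeed $w_\lambda w_1 = e$, so $A_e(w_\lambda\mu_1) = \{w_\lambda\mu_1\}$ and $w_1^{-1}w_\lambda\{w_\lambda\mu_1\} = \{\mu_1\}$ — so condition (3) becomes $\mu_1 \in W_\lambda^0 w_2^{-1}A_{w_2}(\mu_2)$, matching Lemma~\ref{lem:from verma mdoule}(3) up to the harmless $W_\lambda^0$ (using (A\ref{enum:w <-> w lambda condition}) and the fact that $w_\lambda W_\lambda^0 w_2^{-1} = W_\lambda^0 w_\lambda w_2^{-1}$, since $w_\lambda$ normalizes $W_\lambda^0$).

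For the inductive step, suppose $\ell_\lambda(w_1) < \ell_\lambda(w_\lambda)$. Then there is a simple root $\alpha\in\Pi_\lambda$ with $s_\alpha w_1 > w_1$, and I would play the exact same game as in the proof of Lemma~\ref{lem:from verma mdoule}, but now in the first variable, using the contravariance of $\Hom_\mathcal{C}(-,M_\lambda(w_2,\mu_2))$. The three cases are: (i) $\langle\alpha, w_1\lambda\rangle = 0$, handled by Lemma~\ref{lem:<alpha wlambda> = 0} plus the combinatorial identity for the $A$-sets under $\langle\alpha, w_1\lambda\rangle = 0$; (ii) $\langle\check\alpha,\mu_1\rangle\notin\Z_{<0}$, where (A\ref{enum:isom condition}) gives $M_\lambda(w_1,\mu_1)\simeq M_\lambda(s_\alpha w_1, s_\alpha\mu_1)$ and one matches this with the behavior of $w_1^{-1}w_\lambda A_{w_\lambda w_1}(w_\lambda\mu_1)$; and (iii) $\langle\check\alpha,\mu_1\rangle\in\Z_{<0}$ and $\langle\alpha,w_1\lambda\rangle\ne 0$, where (A\ref{enum:exact sequence condition}) gives the four-term exact sequence relating $M_\lambda(w_1,\mu_1)$, $M_\lambda(s_\alpha w_1,\mu_1)$, $M_\lambda(s_\alpha w_1, s_\alpha\mu_1)$. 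Applying $\Hom_\mathcal{C}(-, M_\lambda(w_2,\mu_2))$ and chasing the long exact sequence — exactly as in Lemma~\ref{lem:from verma mdoule} but contravariantly — converts the vanishing/non-vanishing statements at $s_\alpha w_1$ (known by induction, since $\ell_\lambda(s_\alpha w_1) > \ell_\lambda(w_1)$) into the corresponding statements at $w_1$, and simultaneously propagates the $\Ext$-vanishing. On the combinatorial side I need to show $w_1^{-1}w_\lambda A_{w_\lambda w_1}(w_\lambda\mu_1) = (s_\alpha w_1)^{-1}w_\lambda A_{w_\lambda s_\alpha w_1}(w_\lambda\mu_1) \cup (s_\alpha w_1)^{-1}w_\lambda A_{w_\lambda s_\alpha w_1}(w_\lambda s_\alpha\mu_1)$ in case (iii) (and the single-term analogue in case (ii)); this follows from Lemma~\ref{lem:sum of the set A} applied to the reduced expression $w_\lambda w_1 = (w_\lambda s_\alpha w_1)(s_\alpha)$ after checking that the last simple reflection is $s_{w_1^{-1}\alpha}$ and that the relevant $\beta$-root is $w_1^{-1}\alpha$, so the condition $\langle \check\alpha,\mu_1\rangle\in\Z_{<0}$ translates correctly — note $\ell_\lambda(w_\lambda s_\alpha w_1) = \ell_\lambda(w_\lambda w_1) - 1$ since $s_\alpha w_1 > w_1$.

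**Main obstacle.** The genuinely delicate point is the bookkeeping that translates the group-theoretic operation "$w_1 \mapsto s_\alpha w_1$ on the left" into the operation on the $A$-set "append/don't append $s_{w_1^{-1}\alpha}$ on the right of the reduced word for $w_\lambda w_1$," keeping track of the conjugation by $w_1^{-1}w_\lambda$ and of which $\Z_{<0}$-condition is being tested. This is where one must be careful that $A_{w_\lambda w_1}$ is computed with respect to a reduced expression compatible with the chosen $\alpha$, and invoke the reduced-expression-independence (granted after Lemma~\ref{lem:from verma mdoule}) to match it with whatever expression appeared in condition (3); it is the mirror image of the $w_2$-side manipulation in Lemma~\ref{lem:from verma mdoule}, but the presence of both $w_\lambda$ and the inverse makes it easy to slip a sign or an inverse. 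Everything else — the homological algebra with the four-term sequence, the $\Ext$-propagation, the reduction to $S_\lambda$-generators — is a routine transcription of the corresponding part of Lemma~\ref{lem:from verma mdoule} with the variance reversed.
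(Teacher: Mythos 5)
Your overall plan — descending induction on $\ell_\lambda(w_1)$, base case $w_1=w_\lambda$ from Lemma~\ref{lem:from verma mdoule}, and a contravariant replay of the argument in the first variable — is indeed the paper's strategy. But the execution contains two genuine errors, both of which you flagged as delicate and both of which bite.

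First, the sign in the case split is wrong. You claim that when $\langle\check\alpha,\mu_1\rangle\notin\Z_{<0}$, condition (A\ref{enum:isom condition}) gives $M_\lambda(w_1,\mu_1)\simeq M_\lambda(s_\alpha w_1,s_\alpha\mu_1)$. To obtain that isomorphism from (A\ref{enum:isom condition}), one applies it with $\mu=s_\alpha\mu_1$ (so that $M_\lambda(s_\alpha w_1,s_\alpha\mu_1)\simeq M_\lambda(w_1,\mu_1)$), and the hypothesis then reads $\langle\check\alpha,s_\alpha\mu_1\rangle=-\langle\check\alpha,\mu_1\rangle\notin\Z_{<0}$, i.e.\ $\langle\check\alpha,\mu_1\rangle\notin\Z_{>0}$. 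Thus the isomorphism case is $\langle\check\alpha,\mu_1\rangle\notin\Z_{>0}$ and the four-term-sequence case is $\langle\check\alpha,\mu_1\rangle\in\Z_{>0}$ (not $\Z_{<0}$, as you wrote). With your split, case~(ii) includes $\langle\check\alpha,\mu_1\rangle\in\Z_{>0}$, where the claimed isomorphism simply fails. The sign flips precisely because the variance flips; this is exactly the ``mirror image with an inverse'' pitfall you named, and you stepped into it.

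Second, the combinatorial decomposition you propose, writing $w_\lambda w_1=(w_\lambda s_\alpha w_1)\,s_{w_1^{-1}\alpha}$ and invoking Lemma~\ref{lem:sum of the set A} with this right-appended factor, does not make sense as stated: $w_1^{-1}\alpha$ is in general not a simple root of $\Delta_\lambda$, whereas Lemma~\ref{lem:sum of the set A} requires a factorization into elements of $S_\lambda$. The paper instead \emph{prepends} on the left: setting $\beta=-w_\lambda(\alpha)\in\Pi_\lambda$ (note $-w_\lambda$ permutes $\Pi_\lambda$, so $\beta$ \emph{is} simple), one has $s_\beta\cdot(w_\lambda s_\alpha w_1)=w_\lambda w_1$ reduced, and Lemma~\ref{lem:sum of the set A} then gives $A_{w_\lambda w_1}(w_\lambda\mu_1)=\bigcup_{\mu_0\in A_{(s_\beta)}(w_\lambda\mu_1)}s_\beta A_{w_\lambda s_\alpha w_1}(s_\beta\mu_0)$. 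Since $\langle\check\beta,w_\lambda\mu_1\rangle=-\langle\check\alpha,\mu_1\rangle$, the set $A_{(s_\beta)}(w_\lambda\mu_1)$ is a singleton exactly when $\langle\check\alpha,\mu_1\rangle\notin\Z_{>0}$, which recovers the correct sign from the first point. Once you fix the prepend-versus-append issue and the sign, the rest of your plan goes through as a routine contravariant transcription of Lemma~\ref{lem:from verma mdoule}, as you say.
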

\begin{proof}
We prove by backward induction on $\ell_\lambda(w_1)$.
If $w_1 = w_\lambda$, then from Lemma~\ref{lem:from verma mdoule}, (1)--(3) are equivalent.
We use the similar argument in the proof of Lemma~\ref{lem:from verma mdoule}.

Take $\alpha\in\Pi_\lambda$ such that $s_\alpha w_1 > w_1$.
Put $\beta = -w_\lambda(\alpha)\in\Pi_\lambda$.
We have $A_{w_\lambda w_1}(w_\lambda\mu_1) = \bigcup_{\mu_0\in A_{s_\beta}(w_\lambda\mu_1)}s_\beta A_{w_\lambda s_\alpha w_1}(s_\beta\mu_0)$ by Lemma~\ref{lem:sum of the set A}.
First assume that $\langle\alpha,w_1\lambda\rangle = 0$.
Then by Lemma~\ref{lem:<alpha wlambda> = 0}, we have $M_\lambda(w_1,\mu_1) \simeq M_\lambda(s_\alpha w_1,\mu_1)\simeq M_\lambda(s_\alpha w_1,s_\alpha \mu_1)$.
This implies the lemma.

In the rest of this proof, we assume that $\langle\alpha,w_1\lambda\rangle \ne 0$.
First assume that $\langle \check{\alpha},\mu_1\rangle\not\in\Z_{>0}$, then by the condition (A\ref{enum:isom condition}), $M_\lambda(w_1,\mu_1)\simeq M_\lambda(s_\alpha w_1,s_\alpha\mu_1)$.
Since $A_{s_\beta}(w_\lambda\mu_1) = \{w_\lambda\mu_1\}$, $A_{w_\lambda w_1}(w_\lambda\mu_1) = A_{w_\lambda s_\alpha w_1}(w_\lambda s_\alpha \mu_0)$.
Hence we have the lemma.

Finally, we assume that $\langle \check{\alpha},\mu_1\rangle\in\Z_{>0}$.
We have $w_1^{-1}w_\lambda A_{w_\lambda w_1}(w_\lambda\mu_1)\cap W_\lambda^0w_2^{-1}A_{w_2}(\mu_2)\ne \emptyset$ if and only if $(s_\alpha w_1)^{-1}w_\lambda A_{w_\lambda s_\alpha w_1}(w_\lambda\mu_1)\cap W_\lambda^0w_2^{-1}A_{w_2}(\mu_2)\ne \emptyset$ or $(s_\alpha w_1)^{-1}w_\lambda A_{w_\lambda s_\alpha w_1}(w_\lambda s_\alpha \mu_1)\cap W_\lambda^0w_2^{-1}A_{w_2}(\mu_2)\ne \emptyset$ since $A_{s_\beta}(w_\lambda\mu_1) = \{w_\lambda\mu_1,w_\lambda s_\alpha\mu_1\}$.
By the condition (A\ref{enum:isom condition}), we have $M_\lambda(w_1,s_\alpha\mu_1)\simeq M_\lambda(s_\alpha w_1,\mu_1)$.
Hence, there exists an exact sequence $0\to M_\lambda(s_\alpha w_1,\mu_1)\to M_\lambda(s_\alpha w_1,s_\alpha \mu_1)\to M_\lambda(w_1,\mu_1)\to M_\lambda(s_\alpha w_1,\mu_1)\to 0$ by the condition (A\ref{enum:exact sequence condition}).
Therefore (1) implies (3).

Now assume (1) dose not hold, i.e., $\Hom_\mathcal{C}(M_\lambda(w_1,\mu_1),M_\lambda(w_2,\mu_2)) = 0$.
We prove that $\Hom_\mathcal{C}(M_\lambda(s_\alpha w_1,\mu_1),M_\lambda(w_2,\mu_2)) = \Hom_\mathcal{C}(M_\lambda(s_\alpha w_1,s_\alpha \mu_1),M_\lambda(w_2,\mu_2)) = 0$ and, for all $k\in\Z_{>0}$, $\Ext^k_\mathcal{C}(M_\lambda(w_1,\mu_1),M_\lambda(w_2,\mu_2)) = 0$.
Since we have an exact sequence
\[
	0\longrightarrow \Hom_\mathcal{C}(M_\lambda(s_\alpha w_1,\mu_1),M_\lambda(w_2,\mu_2))\to \Hom_\mathcal{C}(M_\lambda(w_1,\mu_1),M_\lambda(w_2,\mu_2)),
\]
we have $\Hom_\mathcal{C}(M_\lambda(s_\alpha w_1,\mu_1),M_\lambda(w_2,\mu_2)) = 0$.
Hence, by induction hypothesis, we have that $\Ext_\mathcal{C}^k((M_\lambda(s_\alpha w_1,\mu_1),M_\lambda(w_2,\mu_2)) = 0$.
Therefore we have 
\[
	\Ext^k_\mathcal{C}((M_\lambda(w_1,\mu_1),M_\lambda(w_2,\mu_2))\simeq \Ext_\mathcal{C}^k(M_\lambda(s_\alpha w_1,s_\alpha \mu_1),M_\lambda(w_2,\mu_2)).
\]
In particular, $\Hom_\mathcal{C}(M_\lambda(s_\alpha w_1,s_\alpha \mu_1),M_\lambda(w_2,\mu_2)) = 0$.
Hence we have 
\[
	\Ext_\mathcal{C}^k(M_\lambda(w_1,\mu_1),M_\lambda(w_2,\mu_2))\simeq \Ext^k_\mathcal{C}(M_\lambda(s_\alpha w_1,s_\alpha \mu_1),M_\lambda(w_2,\mu_2)) = 0.
\]
\end{proof}

\section{Proof of the main theorems}\label{sec:Proof of the main theorems}
In this section, we prove Theorem~\ref{thm:main theorem, principal series} and Theorem~\ref{thm:main theorem, twisted Verma modules} using the result of Section~\ref{sec:General theory}.
First we consider the twisted Verma modules.
Fix a regular dominant integral element $\lambda$.
Put $\mathcal{C} = \mathcal{O}$, $\mathcal{D} = \mathfrak{h}^*$.
Set $M_\lambda(w,\mu) = T_{w^{-1}w_0}M(w_0\mu)$.
\begin{lem}\label{lem:conditions for twisted Verma module}
The modules $\{M_\lambda(w,\mu)\}$ satisfy the conditions (A1--5).
\end{lem}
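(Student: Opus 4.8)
The plan is to verify the five conditions (A1)--(A5) one by one for the objects $M_\lambda(w,\mu) = T_{w^{-1}w_0}M(w_0\mu)$, using known structural properties of the twisting functors $T_w$. Throughout I will freely use the facts that $T_w$ is right exact, that $T_x T_y \simeq T_{xy}$ when $\ell(xy) = \ell(x) + \ell(y)$, that $T_s M(\mu) \simeq M(s\mu)$ when $\langle \check\alpha, \mu\rangle \in \Z_{>0}$ (with $s = s_\alpha$), and that for a simple reflection $s$ one has the standard short exact sequences relating $M(\mu)$, $T_s M(\mu)$ and the kernel/cokernel of the canonical map, together with the adjunction $(T_w, G_w)$ and Arkhipov's description of $\Ext$-vanishing between (twisted) Verma modules.

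For (A\ref{enum:w <-> w lambda condition}): since $\lambda$ is regular, $W_\lambda^0 = \{e\}$, so this condition is vacuous. For (A\ref{enum:isom condition}): take $\alpha \in \Pi_\lambda = \Pi$ with $s_\alpha w > w$, so $\ell((s_\alpha w)^{-1}w_0) = \ell(w^{-1}w_0) - 1$, i.e. $w^{-1}w_0 = s_{w^{-1}\alpha}\cdot(\text{something})$ reduced — more usefully, writing $y = w^{-1}w_0$ we have $\ell(s_\beta y) = \ell(y) + 1$ where $\beta$ is the appropriate simple root, so $T_y = T_{s_\beta} T_{?}$... I would instead phrase it as $M_\lambda(s_\alpha w, \mu) = T_{(s_\alpha w)^{-1}w_0}M(w_0\mu)$ and $M_\lambda(w, s_\alpha\mu) = T_{w^{-1}w_0}M(w_0 s_\alpha\mu)$; since $s_\alpha w > w$ gives a length-additive factorization $w^{-1}w_0 = (s_\alpha w)^{-1}w_0 \cdot s_{?}$ — I need to track which simple reflection appears on the right. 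The cleanest route: there is a simple root $\gamma$ with $w^{-1}w_0 = (s_\alpha w)^{-1}w_0\, s_\gamma$ length-additively and $s_\gamma(w_0\mu) = w_0 s_\alpha \mu$, and when $\langle\check\alpha,\mu\rangle \notin \Z_{<0}$ one checks $\langle\check\gamma, w_0\mu\rangle \in \Z_{\ge 0}$, hence $T_{s_\gamma}M(w_0\mu) \simeq M(w_0 s_\alpha\mu)$ when the pairing is strictly positive (and the two sides coincide trivially when it is zero), giving $M_\lambda(w,s_\alpha\mu) = T_{(s_\alpha w)^{-1}w_0}T_{s_\gamma}M(w_0\mu) \simeq T_{(s_\alpha w)^{-1}w_0}M(w_0\mu) = M_\lambda(s_\alpha w,\mu)$. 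For (A\ref{enum:exact sequence condition}), in the remaining case $\langle\check\alpha,\mu\rangle \in \Z_{<0}$ (equivalently $\langle\check\gamma, w_0\mu\rangle \in \Z_{<0}$), I apply the exact functor-theoretic incarnation of the four-term sequence: the standard exact sequence $0 \to K \to M(w_0\mu) \to M(w_0 s_\alpha\mu)$ or its $T_{s_\gamma}$-twisted analog, pushed forward by $T_{(s_\alpha w)^{-1}w_0}$ (which is only right exact, so I must control $L_1 T_{(s_\alpha w)^{-1}w_0}$ of the relevant modules — this vanishes on Verma modules and on the modules appearing here because they have Verma flags compatible with the length-additivity), yielding exactly $0\to M_\lambda(w,\mu)\to M_\lambda(s_\alpha w,\mu)\to M_\lambda(w,s_\alpha\mu)\to M_\lambda(w,\mu)\to 0$. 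The condition $\langle\alpha, w\lambda\rangle \ne 0$ is automatic here since $\lambda$ is regular.

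For (A\ref{enum:basis step conditoin - Hom}) and (A\ref{enum:basis step conditoin - Ext}): here $w = w_\lambda = w_0$ gives $M_\lambda(w_0,\mu') = T_e M(w_0\mu') = M(w_0\mu')$, while $M_\lambda(e,\mu) = T_{w_0}M(w_0\mu)$, the full twisted Verma module $M^{w_0}(w_0\mu)$, which is isomorphic to the dual Verma module $\delta M(\mu)$ (in the appropriate normalization). So (A\ref{enum:basis step conditoin - Hom}) becomes the classical statement $\Hom_\mathcal{O}(M(w_0\mu'), \delta M(\mu)) \ne 0 \iff w_0\mu' = \mu$ (with $W_\lambda^0 = \{e\}$ and $w_\lambda = w_0$, the right side reads $\mu \in \{w_0\mu'\}$), which holds because $\Hom(M(\nu),\delta M(\mu)) = \Hom(M(\nu), \delta M(\mu))$ is one-dimensional exactly when the simple socle $L(\nu)$ of $\delta M(\mu)$ forces $\nu = \mu$... wait, I need $\delta M(\mu)$ has simple socle $L(\mu)$ and $M(w_0\mu')$ has simple head $L(w_0\mu')$, and a nonzero map exists iff these can be matched, i.e. via $[\delta M(\mu) : L(w_0\mu')] \ne 0$ and the map factors as $M(w_0\mu') \twoheadrightarrow L(w_0\mu') \hookrightarrow \delta M(\mu)$ only when $w_0\mu'$ is the highest weight $\mu$... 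I would pin this down using the fact that a nonzero $M(\nu) \to \delta M(\mu)$ exists iff $\nu = \mu$ since $\Hom(M(\nu),\delta M(\mu)) \cong \Hom(M(\nu), \delta M(\mu))$ and by the contravariant duality $\cong \Hom(M(\mu), \delta M(\nu))^*$ wait that's circular — better: $\dim\Hom(M(\nu),\delta M(\mu)) = [\text{multiplicity of } L(\nu) \text{ in the socle}] = \delta_{\nu,\mu}$ when $\nu$ is maximal, and more generally equals $(\text{number of } \nu\text{-highest weight vectors in } \delta M(\mu))$ which by self-duality of the BGG pairing is $\dim\Hom(M(\mu),M(\nu))^{\vee}$-ish; I will just cite that $\Hom_\mathcal{O}(M(\nu), \delta M(\mu))$ is $\C$ if $\nu = \mu$ and $0$ otherwise. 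Similarly (A\ref{enum:basis step conditoin - Ext}) is $\Ext^k_\mathcal{O}(M(w_0\mu'), \delta M(\mu)) = 0$ for $k > 0$, which is the classical acyclicity of dual Verma modules for $\Ext$ against Verma modules (dual Verma modules are injective-ish relative to $\Delta$-filtered objects; precisely $\Ext^{>0}(M(\nu), \delta M(\mu)) = 0$ always). The main obstacle is (A\ref{enum:exact sequence condition}): getting the precise four-term exact sequence requires knowing that the higher derived functors $L_i T_{(s_\alpha w)^{-1}w_0}$ vanish on all four modules in the short exact sequences being pushed forward, which hinges on those modules admitting filtrations by Verma modules $M(\xi)$ with the length-additivity $\ell((s_\alpha w)^{-1}w_0 \cdot s_\gamma) = \ell((s_\alpha w)^{-1}w_0) + 1$ persisting — I would isolate this as the technical heart and handle it via the Verma-flag characterization of $T_w$-acyclicity.
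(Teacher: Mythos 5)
Your handling of (A1), (A4), and (A5) matches the paper's. For (A1) you correctly note that $\lambda$ regular forces $W_\lambda^0=\{e\}$, making the condition vacuous. For (A4) and (A5) you perform exactly the paper's reduction: $M_\lambda(w_\lambda,\mu')=T_eM(w_0\mu')=M(w_0\mu')$ and $M_\lambda(e,\mu)=T_{w_0}M(w_0\mu)\simeq\delta M(\mu)$ (by \cite[Corollary~5.1]{MR1985191}), after which (A4) is the classical fact that $\Hom_\mathcal{O}(M(\nu),\delta M(\mu))$ is one-dimensional when $\nu=\mu$ and zero otherwise. For (A5) the paper argues via Lie algebra cohomology, $\bigoplus_{\mu'}\Ext^k_\mathcal{O}(M(w_0\mu'),\delta M(\mu))\simeq H^k(\mathfrak{n},\delta M(\mu))\simeq H_k(\overline{\mathfrak{n}},M(\mu))=0$, whereas you invoke the highest-weight-category acyclicity $\Ext^{>0}(M(\nu),\delta M(\mu))=0$ directly. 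These are equivalent and both standard; your version is perhaps more economical.

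The genuine gap is in your treatment of (A2) and (A3). The paper cites \cite[Proposition~6.3]{MR1985191}, which is precisely the statement that the twisted Verma modules $T_{w}M(\cdot)$ satisfy these wall-crossing isomorphisms and the four-term exact sequence. You instead attempt to re-derive both from first principles by factoring $w^{-1}w_0$, identifying the relevant simple reflection $\gamma$, and pushing the standard Verma short exact sequences through the right-exact functor $T_{(s_\alpha w)^{-1}w_0}$. That requires knowing that the relevant higher derived functors $L_iT_{(s_\alpha w)^{-1}w_0}$ vanish on the modules appearing in those sequences, and you explicitly flag this as an unresolved ``technical heart.'' As written, (A3) is therefore not established, and since the four-term exact sequence is the engine driving the induction in Lemma~\ref{lem:from verma mdoule} and Theorem~\ref{thm:main thm, general theory}, this is a real hole, not a cosmetic one. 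The remedy is simply to cite the Andersen--Lauritzen result rather than reprove it; if you do want a self-contained argument, you must supply the $T_w$-acyclicity of Verma modules (which Andersen--Lauritzen prove using Verma flags and Arkhipov's resolution) rather than gesture at it.
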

\begin{proof}
The condition (A\ref{enum:w <-> w lambda condition}) is obvious since $\lambda$ is regular.
The conditions (A\ref{enum:isom condition}) and (A\ref{enum:exact sequence condition}) are \cite[Proposition~6.3]{MR1985191}.
Since $T_{w_0}M(w_0\mu)\simeq \delta M(\mu)$~\cite[Corollary~5.1]{MR1985191}, we have $\Hom_\mathcal{O}(M_\lambda(w_\lambda,\mu'),M_\lambda(e,\mu)) = \Hom_\mathcal{O}(M(w_0\mu'),\delta M(\mu))$.
Since $\Hom_\mathcal{O}(M(w_0\mu'),\delta M(\mu)) \ne 0$ if and only if $w_0\mu' = \mu$, we have (A\ref{enum:basis step conditoin - Hom}).
Moreover, we have $\bigoplus_{\mu'\in\mathfrak{h}^*}\Ext^k_\mathcal{O}(M(w_0\mu'),\delta M(\mu))\simeq H^k(\mathfrak{n},\delta M(\mu))\simeq H_k(\overline{\mathfrak{n}},M(\mu)) = 0$ where $\overline{\mathfrak{n}}$ is the nilradical of the opposite Borel subalgebra of $\mathfrak{b}$.
Hence we have (A\ref{enum:basis step conditoin - Ext}).
\end{proof}
From Lemma~\ref{lem:conditions for twisted Verma module} and Theorem~\ref{thm:main thm, general theory}, we have Theorem~\ref{thm:main theorem, twisted Verma modules}.

Next, we consider the principal series representations of $G$.
This is a full-subcategory of $\mathfrak{g}\oplus\mathfrak{g}$-modules.
We also regard $\mathcal{H}$ as a full-subcategory of $\mathfrak{g}$-bimodules.
\begin{lem}\label{lem:isom between principal series}
Let $\lambda,\mu\in\mathfrak{h}^*$ such that $\lambda - \mu\in\mathcal{P}$, $w\in W$.
Put $\Delta^- = -\Delta^+$ and $\Delta^-_\lambda = -\Delta_\lambda^+$.
\begin{enumerate}
\item There exists $w'\in W_\lambda$ such that $\Delta^+\cap (w'w^{-1})^{-1}\Delta^-\cap w\Delta_\lambda = \emptyset$.
\item Take $w'$ as in (1).
Then we have $L(M(w\lambda),\delta M(w\mu))\simeq L(M(w'\lambda),\delta M(w'\mu))$.
\end{enumerate}
\end{lem}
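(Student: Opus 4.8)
Our plan is to prove (1) by a computation inside the root system and then to derive (2) from (1) by iterating a single ``non-integral reflection'' isomorphism along a minimal coset representative.

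For (1): applying $w^{-1}$ and using $(w'w^{-1})^{-1}=ww'^{-1}$ and $w^{-1}w\Delta_\lambda=\Delta_\lambda$, the set $\Delta^+\cap(w'w^{-1})^{-1}\Delta^-\cap w\Delta_\lambda$ is empty if and only if $w^{-1}\Delta^+\cap w'^{-1}\Delta^-\cap\Delta_\lambda=\emptyset$. Now $w^{-1}\Delta^+$ is a positive system of $\Delta$, so $w^{-1}\Delta^+\cap\Delta_\lambda$ is a positive system of $\Delta_\lambda$, while, since $w'$ stabilises $\Delta_\lambda$, $w'^{-1}\Delta^-\cap\Delta_\lambda=w'^{-1}(\Delta^-\cap\Delta_\lambda)=-w'^{-1}\Delta_\lambda^+$. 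Hence the intersection is empty iff $w^{-1}\Delta^+\cap\Delta_\lambda\subset w'^{-1}\Delta_\lambda^+$, and since a positive system of $\Delta_\lambda$ contained in another must equal it, iff $w^{-1}\Delta^+\cap\Delta_\lambda=w'^{-1}\Delta_\lambda^+$. As the Weyl group $W_\lambda$ of $\Delta_\lambda$ acts simply transitively on the positive systems of $\Delta_\lambda$, there is exactly one such $w'\in W_\lambda$; equivalently, $w'$ is characterised by the requirement that $v:=w(w')^{-1}$ be the minimal-length representative of the coset $wW_\lambda$, i.e.\ $v\Delta_\lambda^+\subset\Delta^+$. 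This reformulation is what part (2) uses.

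For (2), take $w'$ as in (1) and write $w=vw'$ with $v$ minimal in $wW_\lambda$. Put $\xi_1=w'\lambda$, $\xi_2=w'\mu$; since $w'\in W_\lambda$ and $\xi_1-\xi_2=w'(\lambda-\mu)\in\mathcal{P}$ we have $\Delta_{\xi_1}=\Delta_{\xi_2}=\Delta_\lambda$, so $v$ is also minimal in $vW_{\xi_1}$ and the claim reads $L(M(v\xi_1),\delta M(v\xi_2))\simeq L(M(\xi_1),\delta M(\xi_2))$. I would induct on $\ell(v)$, the case $v=e$ being trivial. For $\ell(v)>0$ write $v=s_\alpha v_1$ with $\alpha$ simple and $\ell(v)=\ell(v_1)+1$; then $\ell(s_\alpha v)=\ell(v_1)<\ell(v)$, hence $v^{-1}\alpha\in\Delta^-$, so $\alpha\notin v\Delta_\lambda^+$, and since $v\Delta_\lambda^+\subset\Delta^+$ (minimality of $v$) we get $v_1\Delta_\lambda^+=s_\alpha(v\Delta_\lambda^+)\subset\Delta^+$, so $v_1$ is again minimal in $v_1W_\lambda$; moreover $v_1^{-1}\alpha=-v^{-1}\alpha\in\Delta^+$, and if $v_1^{-1}\alpha$ lay in $\Delta_\lambda$ then $-\alpha=s_\alpha\alpha\in s_\alpha v_1\Delta_\lambda^+=v\Delta_\lambda^+\subset\Delta^+$, which is impossible; so $v_1^{-1}\alpha\in\Delta^+\setminus\Delta_\lambda$. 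As $v_1^{-1}\check\alpha$ is the coroot of $v_1^{-1}\alpha$, this gives $\langle\check\alpha,v_1\xi_1\rangle=\langle v_1^{-1}\check\alpha,\xi_1\rangle\notin\Z$, i.e.\ $\alpha\notin\Delta_{v_1\xi_1}$. Combining the induction hypothesis $L(M(v_1\xi_1),\delta M(v_1\xi_2))\simeq L(M(\xi_1),\delta M(\xi_2))$ with the isomorphism
\[
L(M(s_\alpha\eta_1),\delta M(s_\alpha\eta_2))\simeq L(M(\eta_1),\delta M(\eta_2)),\qquad \eta_i=v_1\xi_i,
\]
valid whenever $\alpha$ is simple, $\langle\check\alpha,\eta_1\rangle\notin\Z$ and $\eta_1-\eta_2\in\mathcal{P}$, completes the induction.

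This last isomorphism $(\star)$ is the crux, and where I expect the real work to lie. One route is via twisting functors: for $\langle\check\alpha,\eta_1\rangle\notin\Z$ one has $T_{s_\alpha}M(\eta_1)\simeq M(s_\alpha\eta_1)$, and, using $\delta M(\eta_2)\simeq T_{w_0}M(w_0\eta_2)$ together with $w_0\alpha=-\beta$ for a simple root $\beta$ (so $\langle\check\beta,w_0\eta_2\rangle=-\langle\check\alpha,\eta_2\rangle\notin\Z$), the module $\delta M(s_\alpha\eta_2)$ is obtained from $\delta M(\eta_2)$ by a twist along the non-integral direction $\beta$; substituting these presentations into $L(-,-)=\Hom_\C(-,-)_{\mathfrak{k}\text{-finite}}$ and using that a twisting functor along a non-integral direction is an equivalence on the relevant block compatible with $L$ yields $(\star)$. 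Equivalently, on the analytic side $(\star)$ asserts that the standard intertwining operator between the two principal series representations is an isomorphism along a non-integral simple direction, which one checks by inducing in stages through the parabolic associated with $\alpha$ from the self-conjugacy under $s_\alpha$ of the non-integral principal series of the corresponding rank-one Levi subgroup. Pinning down the precise interaction of $\delta$, $w_0$ and $T_{s_\alpha}$ on Verma modules in $(\star)$ is the main obstacle; the rest is the combinatorics above plus the bookkeeping — forced, as shown, by minimality of $v$ — that the reflection peeled off at each stage is non-integral for the current parameter.
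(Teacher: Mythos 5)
Your proof of part (1) is essentially the paper's: choose $w'\in W_\lambda$ with $w^{-1}\Delta^+\cap\Delta_\lambda=(w')^{-1}\Delta_\lambda^+$ and unwind. The extra observation that $v:=w(w')^{-1}$ is the minimal-length representative of $wW_\lambda$ is correct and sets up your strategy for (2), but it is not needed for (1) itself.

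For part (2) your route is genuinely different from the paper's, and it has a real gap. The paper's proof of (2) is a single step: condition (1) says exactly that every $\alpha\in\Delta^+\cap(w'w^{-1})^{-1}\Delta^-$ lies outside $w\Delta_\lambda=\Delta_{-w\lambda}$, hence $\langle\check\alpha,-w\lambda\rangle\notin\Z$, and then \cite[4.8.~Proposition]{MR0430005} (Duflo) gives $L(M(w\lambda),\delta M(w\mu))\simeq L(M(w'\lambda),\delta M(w'\mu))$ directly, with no induction at all. You instead peel off simple reflections along a reduced expression for $v$; the combinatorics there is correct (in particular, that $v_1$ stays minimal and that the reflection $s_\alpha$ removed at each stage is non-integral for the current parameter). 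But all of this only reduces (2) to the isomorphism you label $(\star)$, namely $L(M(s_\alpha\eta_1),\delta M(s_\alpha\eta_2))\simeq L(M(\eta_1),\delta M(\eta_2))$ when $\alpha$ is simple and $\langle\check\alpha,\eta_1\rangle\notin\Z$, and you do not prove $(\star)$: you explicitly call ``pinning down the precise interaction of $\delta$, $w_0$ and $T_{s_\alpha}$'' the ``main obstacle'' and offer two unfinished sketches. Since $(\star)$ \emph{is} the rank-one case of Duflo's proposition, your reduction has moved the content of (2) into the one unproved step. Either fill in $(\star)$ carefully (for instance via Joseph's Enright completions, cf.~\cite{MR664114}, or by a clean argument with twisting functors checking that $T_{s_\alpha}$ along a non-integral direction is exact and commutes appropriately with $L(-,\delta M(\cdot))$), or — as the paper does — quote Duflo's 4.8.~Proposition and bypass the induction entirely.
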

\begin{proof}
(1)
Since $w^{-1}\Delta^+\cap \Delta_\lambda$ is a positive system of $\Delta_\lambda$, there exists $w'\in W_\lambda$ such that $w^{-1}\Delta^+\cap \Delta_\lambda = (w')^{-1}\Delta_\lambda^+$.
Since $(w')^{-1}\Delta_\lambda^- = (w')^{-1}(\Delta^-\cap \Delta_\lambda) = (w')^{-1}\Delta^-\cap \Delta_\lambda$, we have $\Delta^+\cap (w'w^{-1})^{-1}\Delta^-\cap w\Delta_\lambda = w(w^{-1}\Delta^+\cap (w')^{-1}\Delta^-\cap \Delta_\lambda) = w(w^{-1}\Delta^+\cap (w')^{-1}\Delta^-_\lambda\cap \Delta_\lambda) = \emptyset$.

(2)
By the condition of $w'$, for all $\alpha\in \Delta^+\cap (w'w^{-1})^{-1}\Delta^-$ we have $\langle\check{\alpha},-w\lambda\rangle \not\in\Z$.
Hence by \cite[4.8.~Proposition]{MR0430005} we have $L(M(w\lambda),\delta M(w\mu))\simeq L(M(w'\lambda),\delta M(w'\mu))$.
\end{proof}

By Lemma~\ref{lem:isom between principal series}, it is sufficient to study $\Hom_\mathcal{H}(L(M(w'\lambda),\delta M(\mu')),L(M(w\lambda),\delta M(\mu)))$ for dominant $\lambda$ and $w,w'\in W_\lambda$.
Moreover, we may assume $\mu\in W_\lambda\mu'$ since $L(M(w\lambda),\delta M(\mu)) = 0$ unless $w\lambda - \mu\in\mathcal{P}$.
Fix such a $\lambda$ and put $M_\lambda(w,\mu) = L(M(w\lambda),\delta M(\mu))$ for $\mu\in \lambda + \mathcal{P}$ and $w\in W_\lambda$.
Put $\mathcal{D} = \lambda + \mathcal{P}$.
For $\alpha\in\Pi_\lambda$, let $C_\alpha$ be Joseph's Enright functor~\cite{MR664114}.
Recall that $M\in\mathcal{O}$ is called $\alpha$-free if the canonical map $M\to C_\alpha M$ is injective.

\begin{lem}\label{lem:conditions for principal series}
Let $\mu\in\lambda + \mathcal{P}$ and $\alpha\in\Pi_\lambda$.
\begin{enumerate}
\item If $N\in\mathcal{O}$ is $\alpha$-free and $\langle\check{\alpha},\mu\rangle\in\Z_{\le 0}$ then $L(M_\lambda(s_\alpha\mu),C_\alpha N)\simeq L(M_\lambda(\mu),N)$.
\item Let $w\in W_\lambda$. If $\langle\check{\alpha},w\lambda\rangle\in\Z_{\le 0}$ and $\langle\check{\alpha},\mu\rangle\in\Z_{\le 0}$, then $L(M(s_\alpha w\lambda),M(s_\alpha\mu))\simeq L(M(w\lambda),M(\mu))$.
\item Let $w\in W_\lambda$. If $\langle\check{\alpha},w\lambda\rangle\in\Z_{\le 0}$ and $\langle\check{\alpha},\mu\rangle\in\Z_{\ge 0}$, then $L(M(s_\alpha w\lambda),\delta M(s_\alpha\mu))\simeq L(M(w\lambda),\delta M(\mu))$.
\item We have $L(M(w_\lambda\lambda),\delta M(\mu))\simeq L(M(\lambda),M(w_\lambda\mu))$.
\item The modules $\{M_\lambda(w,\mu)\}$ satisfy the conditions (A1--5).
\end{enumerate}
\end{lem}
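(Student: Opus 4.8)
The plan is to establish (1) first — it is the only assertion here requiring real input about the Enright functor — then to obtain (2), (3) and (4) from it together with formal properties of $C_\alpha$ and $\delta$, and finally to verify (5) by exactly the bookkeeping that proved Lemma~\ref{lem:conditions for twisted Verma module}.

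For (1): the functor $C_\alpha$ depends only on the copy of $\mathfrak{sl}_2$ attached to $\alpha$, and for an $\alpha$-free module $M$ the canonical map $M\to C_\alpha M$ is injective with $\alpha$-finite cokernel while $C_\alpha M$ is $\alpha$-complete. Since $\langle\check\alpha,\mu\rangle\in\Z_{\le 0}$ forces $M(\mu)\hookrightarrow M(s_\alpha\mu)$ with $C_\alpha M(\mu)\simeq M(s_\alpha\mu)$, the claim of (1) reduces to the statement that $L(C_\alpha M,C_\alpha N)\simeq L(M,N)$ for every $\alpha$-free $M$ (applied with $M=M(\mu)$) and arbitrary $N$. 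The natural map $L(M,N)\to L(C_\alpha M,C_\alpha N)$ is induced by functoriality of $L$ in both variables together with $M\hookrightarrow C_\alpha M$ and $N\hookrightarrow C_\alpha N$; I would prove it bijective from the $f_\alpha$-localization description of $C_\alpha$ by checking that passing to $\mathfrak k$-finite vectors commutes with that localization on the modules at hand, working entirely inside $\mathfrak{sl}_2(\alpha)$. I expect this compatibility of $\mathfrak k$-finiteness with the $\alpha$-local structure to be the main obstacle; once it is in place everything else is formal.

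Granting (1), part (2) is its specialization to $N=M(\mu)$: a Verma module is always $\alpha$-free, $\langle\check\alpha,\mu\rangle\in\Z_{\le 0}$ gives $C_\alpha M(\mu)\simeq M(s_\alpha\mu)$, and $\langle\check\alpha,w\lambda\rangle\in\Z_{\le 0}$ gives $C_\alpha M(w\lambda)\simeq M(s_\alpha w\lambda)$, so (1) reads $L(M(s_\alpha w\lambda),M(s_\alpha\mu))\simeq L(M(w\lambda),M(\mu))$. Part (3) is the $\delta$-dual of (2): one applies the argument of (1) to the co-completion functor $\delta\circ C_\alpha\circ\delta$, which turns the inequalities $\langle\check\alpha,\cdot\rangle\in\Z_{\le0}$ into $\langle\check\alpha,\cdot\rangle\in\Z_{\ge0}$, and uses its effect $\delta M(\mu)\mapsto\delta M(s_\alpha\mu)$ on dual Verma modules when $\langle\check\alpha,\mu\rangle\in\Z_{\ge 0}$; alternatively (3) can be read off from (4) and (2). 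For (4), which exchanges a dual Verma in the second argument with a Verma, I would combine the transpose symmetry $L(M,N)\simeq L(\delta N,\delta M)$ with the identification $\delta M(\nu)\simeq T_{w_0}M(w_0\nu)$ of \cite[Corollary~5.1]{MR1985191} and a passage through (2), the longest element $w_\lambda$ of $W_\lambda$ appearing because $w_\lambda\lambda$ is the $\Delta_\lambda$-antidominant point of $W_\lambda\lambda$.

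Finally (5). Condition (A\ref{enum:w <-> w lambda condition}) is immediate: for $w'\in W_\lambda^0$ one has $(ww')\lambda=w\lambda$, hence $M_\lambda(ww',\mu)=L(M(w\lambda),\delta M(\mu))=M_\lambda(w,\mu)$. For (A\ref{enum:isom condition}): given $\alpha\in\Pi_\lambda$ with $s_\alpha w>w$, dominance of $\lambda$ gives $\langle\check\alpha,w\lambda\rangle\in\Z_{\ge 0}$; if this is $0$ the isomorphism is Lemma~\ref{lem:<alpha wlambda> = 0}, and otherwise $\langle\check\alpha,s_\alpha w\lambda\rangle\in\Z_{<0}$, so part (3) applied with $w$ replaced by $s_\alpha w$ gives $M_\lambda(w,s_\alpha\mu)\simeq M_\lambda(s_\alpha w,\mu)$. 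For (A\ref{enum:exact sequence condition}) (now $\langle\check\alpha,w\lambda\rangle\in\Z_{>0}$ and $\langle\check\alpha,\mu\rangle\in\Z_{<0}$) I would splice the short exact sequence $0\to M(s_\alpha w\lambda)\to M(w\lambda)\to\overline M\to 0$ of Verma modules, its $\delta$-dual relating $\delta M(s_\alpha\mu)$ and $\delta M(\mu)$, and the vanishing of $L(-,-)$ on the intervening $\alpha$-finite pieces, mimicking \cite[Proposition~6.3]{MR1985191}; checking that the resulting four-term sequence has the claimed maps is the fiddliest point here. For (A\ref{enum:basis step conditoin - Hom}) and (A\ref{enum:basis step conditoin - Ext}), using (4) to write $M_\lambda(w_\lambda,\mu')\simeq L(M(\lambda),M(w_\lambda\mu'))$ and the adjunction between $L(M(\lambda),-)$ and $(-)\otimes_{U(\mathfrak g)}M(\lambda)$ reduces the computation to $\Ext^{\bullet}_\mathcal{O}(M(w_\lambda\mu'),\delta M(\mu))$, which equals $\C$ in degree $0$ when $w_\lambda\mu'=\mu$ and vanishes otherwise; the residual $W_\lambda^0$-orbit in (A\ref{enum:basis step conditoin - Hom}) arises — as it did in (A\ref{enum:w <-> w lambda condition}) — from the possible non-regularity of $\lambda$, handled by translation to a regular weight. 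With (A1--5) verified, Theorem~\ref{thm:main thm, general theory} applies; this is also the point at which the independence of $A_{(s_1,\dots,s_l)}(\mu)$ of the chosen reduced expression, used throughout, is finally justified.
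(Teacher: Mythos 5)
Your route diverges from the paper's at several points, and a few of the divergences are genuine gaps rather than merely different choices.

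For (1) the paper simply cites Joseph's \cite[3.8.~Lemma]{MR664114} with $M=M(\mu)$, $M'=M(s_\alpha\mu)$; you instead propose to re-derive it from the $f_\alpha$-localization description of $C_\alpha$, and you yourself flag the compatibility of $\mathfrak{k}$-finiteness with $\alpha$-localization as the unresolved obstacle. That is precisely the content of Joseph's lemma, so your plan is not so much a different proof as a restatement of what must be shown. For (3) there is a real problem: your first suggestion (apply the argument of (1) to $\delta\circ C_\alpha\circ\delta$) silently assumes a commutation of $C_\alpha$ with $\delta$ that is exactly what needs proof, and your fallback (``read (3) off from (4) and (2)'') is circular, because (4) is proved in the paper by iterating (3) to reach the antidominant weight $\mu_0$ and then iterating (2) -- it cannot be used as an input to (3). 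The paper's actual content here is the identity $C_\alpha\delta M(\mu)\simeq\delta M(s_\alpha\mu)$ for $\langle\check\alpha,\mu\rangle\in\Z_{\ge 0}$, established via the $\eta$-twist trick $L(M,\delta N)\simeq L(N,\delta M)^\eta$ together with Joseph's $C_\alpha\delta M(\widetilde\lambda)\simeq\delta M(s_\alpha\widetilde\lambda)$ for dominant regular $\widetilde\lambda$; nothing in your sketch supplies that identity. For (4) the $T_{w_0}$ detour is unnecessary and underspecified; the paper just walks from $\mu$ to an antidominant $\mu_0=w\mu$ with (3), uses $\delta M(\mu_0)\simeq M(\mu_0)$, and then walks back with (2).

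On (5): your observation that (A\ref{enum:isom condition}) is really delivered by (3) rather than (2) is correct and, if anything, repairs a slip in the paper; but invoking Lemma~\ref{lem:<alpha wlambda> = 0} in the case $\langle\check\alpha,w\lambda\rangle=0$ is circular, since that lemma is proved from (A\ref{enum:w <-> w lambda condition}) and (A\ref{enum:isom condition}); fortunately (3) already covers that case. Your plan for (A\ref{enum:exact sequence condition}) re-derives the four-term sequence by hand where the paper cites \cite[4.7.~Corollary]{MR664114}; this is plausibly doable but you leave the fiddliest part undone. The largest gap is in (A\ref{enum:basis step conditoin - Hom})--(A\ref{enum:basis step conditoin - Ext}). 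You appeal to ``the adjunction between $L(M(\lambda),-)$ and $(-)\otimes_{U(\mathfrak g)}M(\lambda)$'' to reduce to $\Ext^\bullet_\mathcal{O}(M(w_\lambda\mu'),\delta M(\mu))$, but an adjunction alone does not compute higher $\Ext$ groups; what is needed is the Bernstein--Gelfand--Joseph--Enright equivalence, which requires the outer (left-slot) weight to be dominant and \emph{regular}. When $\lambda$ is singular this fails, and the paper handles it by $\eta$-twisting so that the regular dominant weight $\mu_1$ sits in the first slot, reducing to $\Ext^k_\mathcal{H}(L(M(\mu_1),\cdot),L(M(\mu_1),\cdot))$, and then translating in $\mu_1$ to deal with singular $\mu$. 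Your remark that the $W_\lambda^0$-orbit comes ``from the possible non-regularity of $\lambda$, handled by translation to a regular weight'' is too vague: one cannot simply translate $\lambda$ itself (it governs the category $\mathcal{H}$ under consideration), and the mechanism for extracting a $W_{\mu_1}^0$-filtration is the Jantzen decomposition of $T_{\mu_2}^{\mu_1}T_{\mu_1}^{\mu_2}M(w\mu_2)$, which your sketch omits.
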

\begin{proof}
(1)
Put $M = M(\mu)$ and $M' = M(s_\alpha\mu)$ in \cite[3.8.~Lemma]{MR664114}.
Then we get (1).

(2)
Take $N = M(\mu)$ in (1) and use \cite[2.5.~Lemma]{MR664114}.

(3)
Let $\widetilde{\lambda}\in \lambda + \mathcal{P}$ be a regular element such that $\widetilde{\lambda}$ is dominant.
Then by~\cite[2.5.~Lemma]{MR733462}, we have $C_\alpha \delta M(\widetilde{\lambda}) \simeq \delta M(s_\alpha\widetilde{\lambda})$.
For $\mathfrak{g}\oplus\mathfrak{g}$-module $N$, let $N^\eta$ be a $\mathfrak{g}\oplus\mathfrak{g}$-module where the action is twisted by $(X,Y)\mapsto (Y,X)$.
Using \cite[2.8]{MR664114}, we have $L(M(\widetilde{\lambda}),C_\alpha\delta M(\mu))\simeq L(M(s_\alpha\widetilde{\lambda}),\delta M(\mu))\simeq L(M(\mu),\delta M(s_\alpha\widetilde{\lambda}))^\eta \simeq L(M(\mu),C_\alpha \delta M(\widetilde{\lambda}))^\eta$.
Notice that $\delta M(s_\alpha\widetilde{\lambda})$ is $\alpha$-free.
Hence we have $L(M(\mu),C_\alpha \delta M(\widetilde{\lambda}))^\eta\simeq L(M(s_\alpha\mu),\delta M(\widetilde{\lambda}))^\eta \simeq L(M(\widetilde{\lambda}),\delta M(s_\alpha\mu))$ by (1).
Therefore we have $C_\alpha \delta M(\mu)\simeq \delta M(s_\alpha\mu)$.
We get (3) by (1).

(4)
Take $w\in W_\lambda$ such that $\langle \check{\beta},w\mu\rangle \in\Z_{\le 0}$ for all $\beta\in\Delta^+_\lambda$.
Put $\mu_0 = w\mu$.
Let $w = s_{\alpha_l}\dotsm s_{\alpha_1}$ be a reduced expression.
Then we have $\langle \check{\alpha_i},s_{\alpha_i}\dots s_{\alpha_l}\mu\rangle\in\Z_{\ge 0}$ and $\langle \check{\alpha_i},s_{\alpha_{i - 1}}\dots s_{\alpha_1}w_\lambda\lambda\rangle\in\Z_{\le 0}$.
Hence by (3), we have $L(M(w_\lambda\lambda),\delta M(\mu))\simeq L(M(ww_\lambda\lambda),\delta M(\mu_0))$.
Take a reduced expression of $ww_\lambda$ and use (2), then we have $L(M(ww_\lambda\lambda),M(\mu_0))\simeq L(M(\lambda),M(w_\lambda \mu))$ by the same argument.
Since $\delta M(\mu_0)\simeq M(\mu_0)$, we have (4).

(5)
The condition (A\ref{enum:w <-> w lambda condition}) is obvious.
The condition (A\ref{enum:isom condition}) follows from (2) and (A\ref{enum:exact sequence condition}) from \cite[4.7.~Corollary]{MR664114}.
To prove (A\ref{enum:basis step conditoin - Hom}) and (A\ref{enum:basis step conditoin - Ext}), we may assume that $\mu'\in W_\lambda\mu$.
Let $\mu_1\in W_\lambda\mu$ such that $\langle\beta,\mu_1\rangle\ge 0$ for all $\beta\in \Delta_\lambda^+$.
Take $w,w'\in W_\lambda$ such that $\mu' = w'\mu_1$ and $\mu = w\mu_1$.
Then by the argument in (4), we have $L(M(w_\lambda\lambda),\delta M(\mu'))\simeq L(M(w_\lambda (w')^{-1}w_\lambda \lambda),\delta M(w_\lambda \mu_1))$ and $L(M(\lambda),\delta M(\mu))\simeq L(M(w^{-1}\lambda),\delta M(\mu_1))$.
We prove (A\ref{enum:basis step conditoin - Hom}) and (A\ref{enum:basis step conditoin - Ext}).
First we assume that $\mu_1$ is regular.
Then by (4), we have 
\begin{align*}
	&\Ext^k_\mathcal{H}(M_\lambda(w_\lambda,\mu'),M_\lambda(e,\mu))\\
	\simeq& \Ext^k_\mathcal{H}(L(M(w_\lambda (w')^{-1}w_\lambda \lambda),\delta M(w_\lambda \mu_1)),L(M(w^{-1}\lambda),\delta M(\mu_1)))\\
	\simeq& \Ext^k_\mathcal{H}(L(M(w_\lambda (w')^{-1}w_\lambda \lambda),\delta M(w_\lambda \mu_1))^\eta,L(M(w^{-1}\lambda),\delta M(\mu_1))^\eta)\\
	\simeq& \Ext^k_\mathcal{H}(L(M(w_\lambda \mu_1), \delta M(w_\lambda (w')^{-1}w_\lambda \lambda)),L(M(\mu_1),\delta M(w^{-1}\lambda)))\\
	\simeq& \Ext^k_\mathcal{H}(L(M(\mu_1), M((w')^{-1}w_\lambda\lambda)),L(M(\mu_1),\delta M(w^{-1}\lambda)))
\end{align*}
By the Bernstein-Gelfand-Joseph-Enright equivalence~\cite[5.9.~Theorem]{MR581584}, this space is isomorphic to $\Ext^k(M((w')^{-1}w_\lambda\lambda),\delta M(w^{-1}\lambda))$.
Hence the proof is done in this case (see the proof of Lemma~\ref{lem:conditions for principal series}).

We prove (A\ref{enum:basis step conditoin - Hom}) and (A\ref{enum:basis step conditoin - Ext}) for general $\mu_1$.
Take a regular element $\mu_2\in \mu_1 + \mathcal{P}$ such that for all $\beta\in\Delta^+_\lambda$.
Let $T_{\mu_1}^{\mu_2}$ be the translation functor of $\mathcal{O}$ and $L_{\mu_1}^{\mu_2}$ the translation functor of $\mathcal{H}$ with respect to the left $\mathfrak{g}$-action.
Then we have $L_{\mu_1}^{\mu_2}L(M,N) = L(M,T_{\mu_1}^{\mu_2}N)$ for $M,N\in \mathcal{O}$.
Since $T_{\mu_1}^{\mu_2}$ commutes with $\delta$, we have
\begin{align*}
\Ext^k_\mathcal{H}(M_\lambda(w_\lambda,\mu'),M_\lambda(e,\mu))
&\simeq \Ext^k_\mathcal{H}(L(M(w_\lambda \lambda),T_{\mu_1}^{\mu_2} M(w'\mu_2)),L(M(\lambda),T_{\mu_1}^{\mu_2}\delta M(w\mu_2)))\\
&\simeq \Ext^k_\mathcal{H}(L_{\mu_1}^{\mu_2}L(M(w_\lambda \lambda), M(w'\mu_2)),L(M(\lambda),T_{\mu_1}^{\mu_2}\delta M(w\mu_2)))\\
&\simeq \Ext^k_\mathcal{H}(L(M(w_\lambda \lambda), M(w'\mu_2)),L_{\mu_2}^{\mu_1}L(M(\lambda),T_{\mu_1}^{\mu_2}\delta M(w\mu_2)))\\
&\simeq \Ext^k_\mathcal{H}(L(M(w_\lambda \lambda),M(w'\mu_2)),L(M(\lambda),\delta T_{\mu_2}^{\mu_1}T_{\mu_1}^{\mu_2}M(w\mu_2)))
\end{align*}
The module $T_{\mu_2}^{\mu_1}T_{\mu_1}^{\mu_2}M(w\mu_2)$ has a filtration $0 ~= M_0\subset M_1\subset \dots\subset M_r = T_{\mu_2}^{\mu_1}T_{\mu_1}^{\mu_2}M(w\mu_2)$ such that $\{M_i/M_{i - 1}\mid 1\le i\le r\} = \{M(wv\mu_2)\mid v\in W_{\mu_1}^0\}$~\cite[2.3 Satz (b)]{MR552943}.
Since $\lambda$ is dominant, $L(M(\lambda),\cdot)$ is an exact functor.
Hence we have an exact sequence $0\to L(M(\lambda),M_i)\to L(M(\lambda),M_{i - 1})\to L(M(\lambda),M_i/M_{i - 1}) \to 0$.
Using the long exact sequence and the result in regular case, we have $\Ext^k_\mathcal{H}(M_\lambda(w_\lambda,\mu'),M_\lambda(e,\mu)) = 0$ for $k>0$.
Moreover, by the vanishing of the $\Ext$-groups, 
\begin{multline*}
\dim \Hom_\mathcal{H}(M_\lambda(w_\lambda,\mu'),M_\lambda(e,\mu)) \\
= \sum_{v\in W_{\mu_1}^0}\dim\Hom^k_\mathcal{H}(L(M(w_\lambda \lambda),M(w'\mu_2)),L(M(\lambda),\delta M(wv\mu_2))).
\end{multline*}
From this formula, we have $\Hom_\mathcal{H}(M_\lambda(w_\lambda,\mu'),M_\lambda(e,\mu))\ne 0$ if and only if $w'\in W_\lambda^0w_\lambda wv$ for some $v\in W_{\mu_1}^0$.
This condition is equivalent to $\mu' = w'\mu_1 \in W_\lambda^0w_\lambda w\mu_1 = W_\lambda^0w_\lambda \mu$.
\end{proof}
From Lemma~\ref{lem:conditions for principal series} and Theorem~\ref{thm:main thm, general theory}, we have Theorem~\ref{thm:main theorem, principal series}.

\def\cprime{$'$} \def\dbar{\leavevmode\hbox to 0pt{\hskip.2ex \accent"16\hss}d}

\end{document}